\newtheorem{theorem}{Theorem}[section]
\newtheorem{corollary}[theorem]{Corollary}
\newtheorem{lemma}[theorem]{Lemma}
\newtheorem{proposition}[theorem]{Proposition}
\newtheorem{conjecture}[theorem]{Conjecture}
\theoremstyle{definition}
\newtheorem{definition}[theorem]{Definition}
\theoremstyle{remark}
\renewcommand{\theclaim}{\textup{\theclaim}}
\newtheorem*{acknowledgements}{Acknowledgements}
\numberwithin{equation}{section}
\newcommand\restr[2]{{
  \left.\kern-\nulldelimiterspace 
  #1 
  \vphantom{\big|} 
  \right|_{#2} 
  }}
\def\openone
\newbox\ipbox
\newcommand{\ip}[2]{\left\langle #1\, , \,#2\right\rangle}
\newcommand{\diracb}[1]{\left\langle #1\mathrel{\mathchoice

{\setbox\ipbox=\hbox{$\displaystyle \left\langle\mathstrut
#1\right.$}

\vrule height\ht\ipbox width0.25pt depth\dp\ipbox}

{\setbox\ipbox=\hbox{$\textstyle \left\langle\mathstrut
#1\right.$}

\vrule height\ht\ipbox width0.25pt depth\dp\ipbox}

{\setbox\ipbox=\hbox{$\scriptstyle \left\langle\mathstrut
#1\right.$}

\vrule height\ht\ipbox width0.25pt depth\dp\ipbox}

{\setbox\ipbox=\hbox{$\scriptscriptstyle \left\langle\mathstrut
#1\right.$}

\vrule height\ht\ipbox width0.25pt depth\dp\ipbox}

}\right. }
\newcommand{\dirack}[1]{\left. \mathrel{\mathchoice

{\setbox\ipbox=\hbox{$\displaystyle \left.\mathstrut
#1\right\rangle$}

\vrule height\ht\ipbox width0.25pt depth\dp\ipbox}

{\setbox\ipbox=\hbox{$\textstyle \left.\mathstrut
#1\right\rangle$}

\vrule height\ht\ipbox width0.25pt depth\dp\ipbox}

{\setbox\ipbox=\hbox{$\scriptstyle \left.\mathstrut
#1\right\rangle$}

\vrule height\ht\ipbox width0.25pt depth\dp\ipbox}

{\setbox\ipbox=\hbox{$\scriptscriptstyle \left.\mathstrut
#1\right\rangle$}

\vrule height\ht\ipbox width0.25pt depth\dp\ipbox}

} #1\right\rangle}
\newcommand{\cj}[1]{\overline{#1}}
\newcommand{\bz}{\mathbb{Z}}
\newcommand{\br}{\mathbb{R}}
\newcommand{\bc}{\mathbb{C}}
\newcommand{\bn}{\mathbb{N}}
\def\blfootnote{\xdef\@thefnmark{}\@footnotetext}
\def\-{^{-1}}
\def\ty{\emptyset}
\begin{document}

\title[On the universal tiling conjecture in dimension one]{On the universal tiling conjecture in dimension one}
\author{Dorin Ervin Dutkay}
\blfootnote{}
\address{[Dorin Ervin Dutkay] University of Central Florida\\
    Department of Mathematics\\
    4000 Central Florida Blvd.\\
    P.O. Box 161364\\
    Orlando, FL 32816-1364\\
U.S.A.\\} \email{Dorin.Dutkay@ucf.edu}

\author{Palle E.T. Jorgensen}
\address{[Palle E.T. Jorgensen]University of Iowa\\
Department of Mathematics\\
14 MacLean Hall\\
Iowa City, IA 52242-1419\\}\email{jorgen@math.uiowa.edu}

\thanks{}
\subjclass[2000]{42A32,05B45} 
\keywords{Fuglede conjecture, spectrum, tiling, spectral pairs, Fourier analysis.}

\begin{abstract} We show that the spectral-tile implication in the Fuglede conjecture in dimension 1 is equivalent to a Universal Tiling Conjecture and also to similar forms of the same implication for some simpler sets, such as unions of intervals with rational or integer endpoints.
\end{abstract}
\maketitle \tableofcontents

\section{Introduction}

\begin{definition}\label{def1.1}
For $\lambda\in\br$ we denote by 
$$e_\lambda(x):=e^{2\pi i \lambda x},\quad(x\in\br)$$
Let $\Omega$ be Lebesgue measurable subset of $\br$ with finite Lebesgue measure. We say that $\Omega$ is {\it spectral} if there exists a subset $\Lambda$ of $\br$ such that $\{e_\lambda :\lambda\in\Lambda\}$ is an orthogonal basis for $L^2(\Omega)$. In this case $\Lambda$ is called a {\it spectrum} for $\Omega$.

We say that $\Omega$ {\it tiles $\br$ by translations} if there exists a subset $\mathcal T$ of $\br$ such that $\{\Omega+t : t\in\mathcal T\}$ is a partition of $\br$, up to Lebesgue measure zero.

\end{definition}

Fuglede's conjecture was stated for arbitrary finite dimension in \cite{Fug74}. It asserts that the tiling and the spectral properties are equivalent. Tao \cite{Tao04} disproved one direction in the Fuglede conjecture in dimensions 5 or higher: there exists a union of cubes which is spectral but does not tile. Later, Tao's counterexample was improved to disprove both directions in Fuglede's conjecture for dimensions 3 or higher \cite{KM06,FaMaMo06}. In the cases of dimensions 1 and 2, both directions are still open. 

We state here Fuglede's conjecture in dimension 1:
\begin{conjecture}\label{co1.2}\cite{Fug74}
A subset $\Omega$ of $\br$ of finite Lebesgue measure is spectral if and only if it tiles $\br$ by translations.
\end{conjecture}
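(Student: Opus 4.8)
Because Conjecture~\ref{co1.2} is the full Fuglede conjecture in dimension one, which remains open, I cannot hope for a complete proof; what I can sketch is the reduction strategy that the abstract promises to make rigorous, since it is the most promising route either to a proof or to a counterexample. The plan is to treat the two implications separately and to reduce each to a statement about much simpler sets. A useful normalization comes first: a spectrum $\Lambda$ for a set $\Omega$ of measure $a$ necessarily has uniform Beurling density $a$ (Landau's density theorem), so one may assume $|\Omega|=1$ and look only for spectra of density one. The first reduction is then to pass from an arbitrary measurable $\Omega$ to a finite union of intervals: approximate $\Omega$ in measure by such a union and argue that the spectral and tiling properties are stable enough under this approximation that it suffices to settle the conjecture for finite unions of intervals.

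Next I would rescale and approximate the endpoints to reduce successively to finite unions of intervals with rational, and then integer, endpoints; after clearing denominators and dilating, such a set becomes a disjoint union of unit intervals $\bigcup_{a\in A}[a,a+1)$ for a finite set $A\subset\Z$. For these model sets both sides of the conjecture become purely arithmetic. Using the fact that tilings of $\br$ by a bounded region are periodic, $\Omega$ tiles $\br$ exactly when $A$ tiles $\Z$; and $\Omega$ is spectral with an integer spectrum exactly when $A$ is a spectral subset of a cyclic group $\Z_N$. Thus, restricted to this class, Conjecture~\ref{co1.2} becomes the Fuglede conjecture for finite cyclic groups, which one attacks through the Coven--Meyerowitz conditions (T1) and (T2) on the mask polynomial $A(x)=\sum_{a\in A}x^{a}$ and through the structure theory of vanishing sums of roots of unity.

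The step I expect to be the genuine obstacle, and the reason the conjecture is still open, is exactly this arithmetic core together with the passage to an integer spectrum. In the tiling $\Rightarrow$ spectral direction one must show that every tile $A\subset\Z$ satisfies (T2) and is therefore spectral; this is known when $A$ has few prime factors but is open in general. In the spectral $\Rightarrow$ tiling direction the situation is more delicate: one must either prove that every spectral subset of $\Z_N$ tiles, which is unknown beyond special $N$, or produce a spectral set with no tiling complement, which would refute the conjecture. Two further hazards must be controlled throughout: the reduction from an arbitrary spectrum to an integer or rational spectrum is not automatic, and it is precisely here that the \emph{universal tiling} reformulation is designed to do the work; and the approximation and limiting arguments must be handled carefully so that orthogonality and completeness of the exponentials survive in the limit. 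My honest expectation, then, is that the soft reductions go through and collapse Conjecture~\ref{co1.2} to the cyclic-group case, but that the cyclic-group case is the true difficulty and cannot be dispatched by these arguments alone.
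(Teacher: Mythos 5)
You are right that Conjecture \ref{co1.2} is the full one-dimensional Fuglede conjecture, that it remains open, and that the paper contains no proof of it: the paper's actual results (Theorems \ref{th4.8} and \ref{th4.13}) are equivalences concerning only the spectral-to-tile direction, reducing it to the Universal Tiling Conjecture (Conjecture \ref{con4.8}) and to the same implication for unions of intervals with rational or integer endpoints. Your high-level architecture --- normalize $|\Omega|=1$ using density/periodicity of spectra, pass to unions of intervals with rational and then integer endpoints, and isolate a finite arithmetic core --- does match the paper's. However, two of the concrete steps you propose would fail, and they are exactly the places where the paper does something different.

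First, your reduction from an arbitrary measurable $\Omega$ to a finite union of intervals by approximation in measure has no known justification: spectrality is an exact condition (orthogonality of $e_\lambda, e_{\lambda'}$ in $L^2(\Omega)$ means $\widehat{\chi_\Omega}(\lambda-\lambda')=0$), and it is destroyed by arbitrarily small perturbations of $\Omega$; the same instability afflicts tiling. The paper never approximates. Instead it exploits periodicity exactly: if $\Lambda=\{0,\lambda_1,\dots,\lambda_{p-1}\}+p\bz$ is a spectrum for $\Omega$, then $\Omega$ $p$-tiles $\br$ by $\frac1p\bz$ (Proposition \ref{pr4.15}) and the fibers $\Omega_x=\{k\in\bz : x+\frac kp\in\Omega\}$ all admit the common spectrum $\frac1p\{\lambda_0,\dots,\lambda_{p-1}\}$ (Proposition \ref{pr4.19}); conversely, any finite family of integer sets with such a common spectrum is realized exactly as the family of fibers of a union of intervals (Proposition \ref{pr4.8} and the proofs of Theorems \ref{th4.8} and \ref{th4.13}). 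The passage between general bounded sets and unions of intervals is a two-way exact construction, not a limit. Second, your claim that the integer-endpoint case becomes the Fuglede conjecture for finite cyclic groups overstates what is available: even for $\Omega$ with integer endpoints, periodicity gives a spectrum $\{0,\lambda_1,\dots,\lambda_{p-1}\}+p\bz$ with the $\lambda_i$ real, and no argument is known that forces such an $\Omega$ to have a rational spectrum. This is precisely why the paper's arithmetic core, UTC($p$), is a hybrid statement: the finite set $\Gamma\subset\br$ is real, only its spectrum $\frac1pA$ is required to lie in $\frac1p\bz$, and the conclusion is a \emph{common} tiling complement for all such $A$ --- not the statement that spectral subsets of a cyclic group tile it. Relatedly, your appeal to Coven--Meyerowitz (T2) concerns the tile-to-spectral direction, which lies outside everything the paper reduces; the paper's equivalences say nothing about that direction.
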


We focus on the spectral-tile implication in the Fuglede conjecture and present some equivalent statements. One of the main ingredients that we will use is the fact that any spectrum is {\it periodic} (see \cite{BoMa11,IoKo12}).

 There are good reasons for our focus on the cases of the conjectures in one dimension. One reason is periodicity (see the next definition): it is known, in 1D, that the possible sets $\Lambda$ serving as candidates for spectra, in the sense of Fuglede's conjecture (Conjecture \ref{co1.2}), must be periodic.  A second reason lies in the difference, from 1D to 2D, in the possibilities for geometric configurations of translation sets.  

The Universal Tiling Conjecture (Conjecture \ref{con4.8}) suggests a reduction of the implication from spectrum to tile in Fuglede's conjecture, to a consideration of finite subsets of  $\bz$. Hence computations for the problems in 1D are arithmetic in nature, as opposed to geometric; and connections to classical Fourier series may therefore be more direct in 1D.

If $\Omega$ is spectral then any spectrum $\Lambda$ is {\it periodic} with some period $p\neq0$, i.e., $\Lambda+p=\Lambda$, and $p$ is an integer multiple of $\frac{1}{|\Omega|}$. We call $p$ a period for $\Lambda$. If $p=\frac{k(p)}{|\Omega|}$ with $k(p)\in\bn$, then $\Lambda$ has the form

\begin{equation}
\Lambda=\{\lambda_0,\dots,\lambda_{k(p)-1}\}+p\bz,
\label{eq3.8.1}
\end{equation}  
with $\lambda_0,\dots,\lambda_{k(p)-1}\in[0,p)$, see \cite{BoMa11,IoKo12}. The reason that there are $k(p)$ elements of $\Lambda$ in the interval $[0,p)$ can be seen also from the fact that the Beurling density of a spectrum $\Lambda$ has to be ${|\Omega|}$, see \cite{Lan67a}.

These assertions follow from \cite{IoKo12}. According to \cite{IoKo12}, if $\Omega$ has Lebesgue measure 1 and is spectral with spectrum $\Lambda$, with $0\in\Lambda$, then $\Lambda$ is periodic, the period $p$ is an integer and $\Lambda$ has the form
\begin{equation}
\Lambda=\{\lambda_0=0,\lambda_1,\dots,\lambda_{p-1}\}+p\bz,
\label{eq4.1}
\end{equation}
where $\lambda_i$ in $[0,p)$ are some distinct real numbers.

\begin{definition}\label{def1.5}
Let $A$ be a finite subset of $\br$. We say that $A$ is {\it spectral} if there exists a finite set $\Lambda$ in $\br$ such that $\{e_\lambda : \lambda\in\Lambda\}$ is a Hilbert space for $L^2(\delta_A)$ where 
$\delta_A$ is the atomic measure $\delta_A:=\sum_{a\in A}\delta_a$ and $\delta_a$ is the Dirac measure at $a$. We call $\Lambda$ a {\it spectrum} for $A$. 

\end{definition}
We formulate the following "Universal Tiling Conjecture" for a fixed number $p\in\bn$:
\begin{conjecture}\label{con4.8}{\bf[UTC($p$)]}
Let $p\in\bn$. Let $\Gamma:=\{\lambda_0=0,\lambda_1,\dots,\lambda_{p-1}\}$ be a subset of $\br$ with $p$ elements. Assume $\Gamma$ has a spectrum of the form $\frac1pA$ with $A\subset \bz$. Then for every finite family $A_1,A_2,\dots,A_n$ of subsets of $\bz$ such that $\frac1pA_i$ is a spectrum for $\Gamma$ for all $i$, there exists a common tiling subset $\mathcal T$ of $\bz$ such that the set $A_i$ tiles $\bz$ by $\mathcal T$ for all $i\in\{1,\dots,n\}$.
\end{conjecture}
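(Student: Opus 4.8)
The plan is to first recast the hypothesis as a purely combinatorial packing condition, and then to reduce the search for a common tiling complement to a single coclique problem in a finite cyclic group. Since $L^2(\delta_\Gamma)\cong\bc^p$ is $p$-dimensional, a set $\tfrac1p A$ (with $A\subset\bz$) is a spectrum for $\Gamma$ precisely when $A$ has $p$ elements and the $p$ exponentials $e_{a/p}$, $a\in A$, are mutually orthogonal on $\Gamma$; writing $m_\Gamma(\xi):=\sum_{\gamma\in\Gamma}e^{2\pi i\gamma\xi}$, this orthogonality is exactly $m_\Gamma\!\big(\tfrac{a-a'}{p}\big)=0$ for $a\ne a'$. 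Hence, setting
\[
S:=\{\,n\in\bz : m_\Gamma(n/p)=0\,\},
\]
the hypothesis ``$\tfrac1p A_i$ is a spectrum for $\Gamma$'' is equivalent to ``$A_i$ has $p$ elements and $(A_i-A_i)\setminus\{0\}\subseteq S$.'' Note that $0\notin S$ because $m_\Gamma(0)=p$, and $S=-S$. Thus every $A_i$ is a $p$-element $S$-packing, and crucially the set $S$ depends only on $\Gamma$, not on the index $i$; the standing assumption that \emph{some} such spectrum exists guarantees that $S$ is rich enough to contain a full $p$-packing.

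The key observation is that a single $\mathcal T$ whose difference set avoids $S$ tiles \emph{all} the $A_i$ at once. Indeed, if $(\mathcal T-\mathcal T)\cap S=\emptyset$, then for each $i$ a common nonzero difference of $A_i$ and $\mathcal T$ would lie in $(A_i-A_i)\setminus\{0\}\subseteq S$ and in $\mathcal T-\mathcal T$, which is impossible; hence $A_i\oplus\mathcal T$ is a packing for every $i$ simultaneously. Because all $A_i$ have the same cardinality $p$, the density requirement is also the same for all $i$, and for a periodic $\mathcal T$ a packing together with full density forces a genuine tiling. So the conjecture reduces to producing one periodic set $\mathcal T\subseteq\bz$ with $(\mathcal T-\mathcal T)\cap S=\emptyset$ and $\mathrm{dens}(\mathcal T)=1/p$; this object is ``universal'' precisely in that it no longer refers to the individual $A_i$.

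Next I would pass to a finite cyclic group. Using the periodicity of spectra recalled in the Introduction (and the expected periodicity of tilings), I would fix a common period $N$, arrange each $A_i$ inside one period, and work in $\bz_N:=\bz/N\bz$, seeking $C\subseteq\bz_N$ with $(C-C)\cap\bar S=\emptyset$ (where $\bar S$ is the image of $S$) and $\abs{C}=N/p$; then $\mathcal T:=C+N\bz$ does the job, since in a finite group a packing $A_i\oplus C$ with $\abs{A_i}\,\abs{C}=N$ is automatically all of $\bz_N$. In the language of the Cayley graph $G:=\mathrm{Cay}(\bz_N,\bar S)$, each $A_i$ is a clique of size $p$ and we want an independent set $C$ of size $N/p$. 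The clique--coclique bound gives $\alpha(G)\le N/\omega(G)\le N/p$, so we are asking for a maximum independent set attaining this bound, whose existence is governed by the eigenvalues $\widehat{\mathbf 1_{\bar S}}(\xi)=\sum_{s\in\bar S}e^{2\pi i s\xi/N}$, which is where the arithmetic of $m_\Gamma$ re-enters.

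The hard part is exactly this last existence step: producing the coclique, equivalently the universal complement, that attains the clique--coclique bound. This is genuinely equivalent to the open spectral-tile implication---each $A_i$ is spectral, and obtaining even one tiling complement is the content of that implication---so I do not expect an unconditional proof in full generality, which is why the statement is phrased as a conjecture. My concrete line of attack would be to settle it first for $p$ prime, where the structure of $\bz_p$ should force every $p$-element $S$-packing to be, after an affine normalization, an arithmetic progression with an explicit complementary coclique; then to treat prime powers; and finally to attempt general $p$ by decomposing $\bz_N$ via the Chinese Remainder Theorem and assembling $C$ from the complements on the prime-power factors. The real obstruction lies in this last step---showing the prime-power complements can be chosen compatibly so that their combination still avoids $\bar S$---and it is precisely the subtlety that the equivalence with Fuglede's conjecture predicts must be present.
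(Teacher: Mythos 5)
The first thing to say is that this statement is not a theorem of the paper: it is the Universal Tiling Conjecture UTC($p$), and the paper never proves it. The paper's contribution (Theorem \ref{th4.8}) is to show that UTC($p$) for all $p$ is \emph{equivalent} to the spectral-tile implication in Fuglede's conjecture in dimension one, which is open. So there is no proof in the paper to compare yours against, and your closing admission that you do not expect an unconditional argument is exactly right: what you have written is a reduction strategy, not a proof, and none is currently possible without settling the open problem itself.

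As for the reduction: the first steps are correct. Since $L^2(\delta_\Gamma)$ has dimension $p$, the condition that $\frac1pA_i$ be a spectrum for $\Gamma$ is equivalent to $|A_i|=p$ together with $(A_i-A_i)\setminus\{0\}\subseteq S$, where $S$ is the zero set you define; and any periodic $\mathcal T$ of density $1/p$ with $(\mathcal T-\mathcal T)\cap S=\emptyset$ is indeed a common tiling complement, since packing plus full density forces tiling once $\mathcal T$ is periodic. But there are two soft spots. First, demanding that $\mathcal T-\mathcal T$ avoid \emph{all} of $S$ is strictly stronger than what UTC($p$) asks: the conjecture quantifies over a given finite family, so you only need to avoid the finitely many differences $\bigcup_i(A_i-A_i)\setminus\{0\}$; your ``reduction'' is really only a sufficiency claim, and the stronger universal statement could conceivably fail while UTC($p$) holds. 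Second, and more seriously for the passage to $\bz_N$: the elements $\lambda_1,\dots,\lambda_{p-1}$ of $\Gamma$ are arbitrary real numbers, so $m_\Gamma$ is an almost periodic sum, not a polynomial with rational frequencies; $S$ need not be periodic, and ``the image of $S$ in $\bz_N$'' can be badly behaved (it may even contain $0$, which destroys the Cayley-graph formulation). Only the finite difference sets of the $A_i$ can legitimately be reduced modulo a large $N$, which is another reason the finite-family formulation is the right one. Finally, for comparison: the closest thing the paper has to a derivation of UTC($p$) is the conditional one inside the proof of Theorem \ref{th4.8}, where, given $A_1,\dots,A_n$, one builds $\Omega=\bigcup_{i}\left([r_i,r_{i+1})+\frac1pA_i\right)$ with suitably irrational cut points $r_i$, shows via Proposition \ref{pr4.8} that $\Omega$ is spectral with spectrum $\Gamma+p\bz$, and then \emph{assumes} that spectral sets tile, invoking the Lagarias--Wang rationality and periodicity structure of one-dimensional tilings \cite{LaWa96} to force the tiling set into $\frac1p\bz$ and read off a common tiling complement for the $A_i$. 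Your clique--coclique attack is a genuinely different, direct route, but it stalls exactly where it must: producing the coclique of size $N/p$ attaining the clique--coclique bound is the open problem itself.
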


One of the main results in this paper shows the equivalence of the spectral-tile implication in Fuglede's conjecture and the Universal Tiling Conjecture. 
\begin{theorem}\label{th4.8}
The following affirmations are equivalent.
\begin{enumerate}
	\item The Universal Tiling Conjecture is true for all $p\in\bn$.
	\item Every bounded Lebesgue measurable spectral set tiles by translations. 
\end{enumerate}
Moreover, if these statements are true and if $\Omega$, $|\Omega|=1$, is a bounded Lebesgue measurable set which has a spectrum with period $p$, then $\Omega$ tiles by a subset $\mathcal T$ of $\frac1p\bz$. 
\end{theorem}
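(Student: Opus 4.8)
The plan is to pass between the continuous set $\Omega$ and a family of finite subsets of $\bz$ through a ``fiber'' decomposition adapted to the period $p$ of the spectrum. Throughout I normalize so that $|\Omega|=1$ and, translating the spectrum, assume $0\in\Lambda$, so that by the cited results $\Lambda=\Gamma+p\bz$ with $\Gamma=\{0,\lambda_1,\dots,\lambda_{p-1}\}\subset[0,p)$; translating $\Omega$ I may also take $\Omega\subset[0,\infty)$ bounded. For $x\in[0,1/p)$ I set
\[
A(x):=\{\,j\in\bz : x+\tfrac jp\in\Omega\,\},
\]
the fiber of $\Omega$ over $x$ relative to the lattice $\tfrac1p\bz$. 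Boundedness gives $A(x)\subset\{0,\dots,R-1\}$ for a fixed $R$, so $x\mapsto A(x)$ takes only finitely many values.

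The engine consists of two facts. First, a spectral statement: for a.e.\ $x$ the set $\tfrac1pA(x)$ is a spectrum for $\Gamma$. To prove it I would use the Zak-type unitary $Z\colon L^2(\br)\to L^2\big([0,1/p),\ell^2(\bz)\big)$, $(Zf)(x)=(f(x+j/p))_j$, under which $L^2(\Omega)$ becomes the direct integral of the fibers $\ell^2(A(x))$. A direct computation gives $(Ze_{\gamma+pn})(x)_j=e^{2\pi i(\gamma+pn)x}\,e^{2\pi i\gamma j/p}$, so in each fiber $e_{\gamma+pn}$ is the vector $\phi_\gamma(x)=(e^{2\pi i\gamma j/p})_{j\in A(x)}$ times a scalar. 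Feeding orthogonality of $\{e_\lambda\}$ into this representation and reading off the Fourier coefficients in the $n$-variable forces $\langle\phi_\gamma(x),\phi_{\gamma'}(x)\rangle=0$ for a.e.\ $x$ and all $\gamma\neq\gamma'$, while completeness forces the $\phi_\gamma(x)$ to span each fiber; thus $\{\phi_\gamma(x)\}_{\gamma\in\Gamma}$ is an orthogonal basis of $\ell^2(A(x))$, i.e.\ $\tfrac1p\Gamma$—equivalently (square-matrix row/column orthogonality) $\tfrac1pA(x)$—is a spectrum, and $|A(x)|=p$ a.e. Second, a tiling statement: for $\mathcal T\subset\bz$, $\Omega$ tiles $\br$ by $\tfrac1p\mathcal T$ if and only if $A(x)\oplus\mathcal T=\bz$ for a.e.\ $x$, which is immediate on writing the covering condition $\sum_{t}1_\Omega(x-t/p)=1$ fiberwise.

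Granting these, (1)$\Rightarrow$(2) is quick: the finitely many distinct fibers $A(x)$ are subsets of $\bz$ whose $\tfrac1p$-scalings are spectra for $\Gamma$, so the hypothesis of UTC($p$) is met and the conjecture yields a single $\mathcal T\subset\bz$ tiling all of them, whence $\Omega$ tiles by $\tfrac1p\mathcal T\subset\tfrac1p\bz$. Read for a fixed $p$, this same argument proves the final ``moreover'': once (1) and (2) hold, UTC($p$) applies to the fibers of any spectral $\Omega$ with $|\Omega|=1$ and period $p$, so $\Omega$ tiles by a subset of $\tfrac1p\bz$.

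For (2)$\Rightarrow$(1) I would realize a given family $A_1,\dots,A_n$ as the fibers of one set: partitioning $[0,1/p)=\bigsqcup_i I_i$ and putting $\Omega=\bigsqcup_{i=1}^n\big(I_i+\tfrac1pA_i\big)$ gives a bounded set of measure $1$, and running the computation behind the spectral statement in reverse (using that each $\tfrac1pA_i$ is a spectrum for $\Gamma$) shows $\Gamma+p\bz$ is a spectrum for $\Omega$; so by (2) this $\Omega$ tiles $\br$. The difficulty—and, I expect, the crux of the theorem—is that (2) only yields \emph{some} complement $\mathcal T'\subset\br$, whereas the tiling statement decodes into a \emph{common} tiling of the $A_i$ only when $\mathcal T'\subset\tfrac1p\bz$: a complement at a finer scale can shift the pieces $I_i$ among one another and produces merely a joint ``mixed'' tiling, not a common one. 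Overcoming this means forcing the tiling to the scale $\tfrac1p\bz$—either by choosing the $I_i$ of incommensurable lengths and invoking periodicity/rigidity of one-dimensional tilings by a single bounded tile to exclude fiber-mixing, or by taking the $I_i$ rational, rescaling by $pn$ to the integer union of intervals $\tilde A+[0,1)$ with $\tilde A=\bigsqcup_{r}(r+nA_{r+1})$, and combining the rational-tiling reduction with the residue-block structure of $\tilde A$ to extract the common complement. This scale-matching is precisely where the universality built into UTC is consumed.
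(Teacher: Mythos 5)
Your fiber decomposition, the spectral statement for the fibers, the tiling decoding, the proof of (1)$\Rightarrow$(2), and the ``moreover'' clause are all correct and coincide with the paper's route (Propositions \ref{pr4.15}, \ref{pr4.19}, \ref{pr4.7}, \ref{pr4.8}, followed by applying UTC($p$) to the finitely many distinct fibers).

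The gap is in (2)$\Rightarrow$(1). You build the right set $\Omega=\bigcup_i\left(I_i+\frac1pA_i\right)$ and correctly isolate the crux --- hypothesis (ii) only produces \emph{some} complement $\mathcal T\subset\br$, and your decoding gives a common tiling of the $A_i$ only if $\mathcal T\subset\frac1p\bz$ --- but you then stop, offering two conditional sketches instead of an argument. That scale-forcing step is the actual content of this implication, and the paper executes a sharpened version of your first sketch. Concretely: choose the cut points $0=r_1<\dots<r_{n+1}=\frac1p$ with $r_i-r_j\notin\Q$ unless $i=j$ or $\{i,j\}=\{1,n+1\}$. Since $\Omega$ is bounded with $|\Omega|=1$, the structure theorem of Lagarias and Wang \cite{LaWa96} gives that any tiling complement $\mathcal T\ni0$ is periodic with \emph{integer} period $s$, $\mathcal T=\{0=t_0,\dots,t_J\}+s\bz$, and that all $t_j$ are \emph{rational}; note that periodicity alone (your word ``periodicity/rigidity'') is not enough, it is the rationality of the $t_j$ that interacts with the irrational $r$-differences. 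Indeed, if $\cj{(\Omega+t)}\cap\cj{(\Omega+t')}\neq\ty$ for $t\neq t'$ in $\mathcal T$, then two constituent intervals share an endpoint, so $r_{i+1}+\frac{s_i}p+t=r_j+\frac{s_j}p+t'$ for some indices; rationality of $t,t'$ forces $r_{i+1}-r_j\in\Q$, hence $i+1=j$ or $\{i+1,j\}=\{1,n+1\}$, and in either case $r_{i+1}-r_j\in\frac1p\bz$, so $t-t'\in\frac1p\bz$. A chaining argument (translates from a new coset $t_j+s\bz$ must touch, in closure, translates already known to lie in $\frac1p\bz$-offsets, since otherwise $\br$ is not covered) then yields $\mathcal T\subset\frac1p\bz$, after which your decoding finishes the proof. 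Your alternative sketch with rational cut points cannot close the gap under hypothesis (ii) alone: with rational $r_i$ nothing excludes a complement outside $\frac1p\bz$ that mixes the blocks $I_i$; the paper uses that rational construction only in the proof of Theorem \ref{th4.13}, where the hypothesis already supplies a complement inside $\frac1p\bz$.
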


The term ``universal tiling'' appears also in \cite{PeWa01} for a special class of tiles of $\br$, namely those that tile $\br^+$. On the dual side, the Universal Spectrum Conjecture was introduced in \cite{LaWa97} where it is proved that some sets $\Omega$ which tile by some special tiling set $\mathcal T$ have a spectrum $\Lambda$ which depends only on $\mathcal T$. In \cite{FaMaMo06} it is proved that the Universal Spectrum Conjecture is equiv-
alent to the tile-spectral implication in Fuglede's conjecture, in the case of
finite abelian groups. The notion of universal tiling complements is also in
troduced in \cite{FaMaMo06}, and it is remarked (see Remark 2 in \cite{FaMaMo06}) that
the spectral-tile implication in Fuglede's conjecture is equivalent to a universal tiling conjecture, again for finite abelian groups. In dimension 1 this
means that for cyclic groups the spectral-tile implication is equivalent to all
spectral sets possessing a universal tiling complement. In Theorem \ref{th4.8} we
prove this result in full generality, for any bounded Lebesgue measurable sets $\Omega$ as in the
original setting in the Fuglede conjecture.

The second main result in this paper shows that the spectral-tile implication in the Fuglede conjecture is equivalent to some formulations of this implication for some special classes of sets $\Omega$: unions of intervals with rational or integer endpoints.

\begin{theorem}\label{th4.13}
The following affirmations are equivalent:
\begin{enumerate}
	\item For every finite union of intervals with rational endpoints $\Omega=\cup_{i=1}^n(\alpha_i,\beta_i)$ with $|\Omega|=1$, if $\Omega$ has a spectrum $\Lambda$ with period $p$, then $\Omega$ tiles $\br$ by a subset $\mathcal T$ of $\frac1p\bz$.
	\item For every finite union of intervals with integer endpoints $\Omega=\cup_{i=1}^n(\alpha_i,\beta_i)$, $|\Omega|=N$, if $\Omega$ has a spectrum $\Lambda$ with minimal period $\frac rN$, $r\in\bz$, then $\frac Nr$ is an integer (see Corollary \ref{cor3.9}) and $\Omega$ tiles $\br$ 
with a subset $\mathcal T$ of $\frac Nr\bz$. 
	\item Every bounded Lebesgue measurable spectral set tiles by translations. 
\end{enumerate}
\end{theorem}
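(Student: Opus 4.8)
The plan is to prove the three-way equivalence by combining a dilation argument relating (i) and (ii) with an appeal to Theorem \ref{th4.8}, which already supplies the equivalence between the Universal Tiling Conjecture (for all $p$) and statement (iii). Thus it suffices to establish (iii)$\Rightarrow$(i), the scaling equivalence (i)$\Leftrightarrow$(ii), and the one genuinely new implication (i)$\Rightarrow$UTC($p$) for every $p\in\bn$; feeding the last into Theorem \ref{th4.8} then closes the cycle. The implication (iii)$\Rightarrow$(i) is nearly immediate: a finite union of intervals with rational endpoints and measure $1$ is a bounded Lebesgue measurable set, so if it is spectral it tiles by (iii), and the ``moreover'' part of Theorem \ref{th4.8} places the tiling set inside $\frac1p\bz$ once the spectrum has period $p$.

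The equivalence (i)$\Leftrightarrow$(ii) I would obtain by dilation. If $\Omega$ has rational endpoints and $|\Omega|=1$, multiplying by a common denominator $N$ of the endpoints yields $N\Omega$ with integer endpoints and $|N\Omega|=N$; the unitary $f\mapsto \sqrt N\,f(N\cdot)$ turns a spectrum $\Lambda$ of period $p$ for $\Omega$ into the spectrum $\frac1N\Lambda$ of minimal period $\frac rN$ (with $r\mid p$) for $N\Omega$, and a tiling set $\mathcal T\subset\frac Nr\bz$ for $N\Omega$ scales back to $\frac1N\mathcal T\subset\frac1r\bz\subset\frac1p\bz$ for $\Omega$. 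Running the dilation in reverse gives the converse, with the integrality of $\frac Nr$ supplied by Corollary \ref{cor3.9}.

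The heart of the matter is (i)$\Rightarrow$UTC($p$), and the hard part is producing a \emph{single} common tiling set for the whole family $A_1,\dots,A_n$. My idea is to encode all the $A_i$ as fibers of one rational interval union. Since each $\frac1pA_i$ is a spectrum for the $p$-point set $\Gamma$, the $p\times p$ matrix $\frac{1}{\sqrt p}\big[e^{2\pi i a\gamma/p}\big]_{a\in A_i,\,\gamma\in\Gamma}$ is unitary; reading its unitarity by columns rather than rows gives $\sum_{a\in A_i}e^{2\pi i a(\gamma-\gamma')/p}=0$ for distinct $\gamma,\gamma'\in\Gamma$, and in particular $|A_i|=p$. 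Partition $[0,\frac1p)$ into $n$ subintervals $\tilde E_i$ with rational endpoints (say of equal length $\frac1{np}$) and set
\begin{equation*}
\Omega^\sharp:=\bigcup_{i=1}^n\Big(\tilde E_i+\tfrac1p A_i\Big).
\end{equation*}
Then $\Omega^\sharp$ is a finite union of intervals with rational endpoints and $|\Omega^\sharp|=\sum_i \frac1{np}\cdot p=1$. I would verify that $\Gamma+p\bz$ is a spectrum for $\Omega^\sharp$ by a direct Fourier computation: writing $x=y+\frac jp$ with $y\in[0,\frac1p)$, one finds that $\widehat{\chi_{\Omega^\sharp}}(\gamma-\gamma'+pk)$ is, up to a scalar, the $k$-th Fourier coefficient on $[0,\frac1p)$ of $y\mapsto e^{-2\pi i(\gamma-\gamma')y}\sum_{a\in A_{i(y)}}e^{-2\pi i(\gamma-\gamma')a/p}$, which vanishes for all $k$ by the column orthogonality above; the case $\gamma=\gamma'$, $k\neq0$ reduces to the multiplicity relation $\sum_m\chi_{\Omega^\sharp}(x+\frac mp)=p$. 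Orthogonality together with the fact that $\Gamma+p\bz$ has density $|\Omega^\sharp|=1$ then gives a basis.

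Applying (i) to $\Omega^\sharp$ produces a tiling of $\br$ by some $\mathcal T=\frac1pT$ with $T\subset\bz$. Restricting the tiling to the slab $\tilde E_i+\frac1p\bz$—which, since the $\tilde E_i$ are pairwise disjoint, receives contributions only from the $i$-th fiber—forces $\tilde E_i+\frac1p(A_i+T)=\tilde E_i+\frac1p\bz$ as a partition, i.e. $A_i\oplus T=\bz$ for every $i$. Hence $T$ is the desired common tiling complement, so UTC($p$) holds, and Theorem \ref{th4.8} converts this into (iii). The only delicate points I anticipate are the completeness of the exponential basis for $\Omega^\sharp$ (via Landau density, or a fiberwise argument) and the measure-theoretic bookkeeping in the slab step; but the conceptual obstacle—making one tiling set serve all the $A_i$ simultaneously—is precisely what the single-set construction $\Omega^\sharp$ is designed to overcome.
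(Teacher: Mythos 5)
Your proposal is essentially the paper's own proof: your $\Omega^\sharp=\bigcup_i\bigl(\tilde E_i+\frac1pA_i\bigr)$ is exactly the paper's construction $\Omega=\bigcup_i\bigl((r_i,r_{i+1})+\frac1pA_i\bigr)$ with rational $r_i$ partitioning $[0,\frac1p)$, your slab argument extracting $A_i\oplus T=\bz$ is the paper's argument, and the scaling equivalence (i)$\Leftrightarrow$(ii) and the appeals to Theorem \ref{th4.8} (for (iii)$\Rightarrow$(i) and for UTC $\Rightarrow$ (iii)) coincide with what the paper does. One caveat: your first-choice route to completeness of $\{e_\lambda:\lambda\in\Gamma+p\bz\}$ in $L^2(\Omega^\sharp)$ --- orthogonality plus Beurling density equal to $|\Omega^\sharp|$ --- is not valid, since orthogonality at critical density does not imply completeness (e.g.\ $\bz\setminus\{0\}$ is orthogonal for $L^2[0,1]$ and has density $1$, yet is not complete); you must instead use your fallback ``fiberwise argument,'' which is precisely Proposition \ref{pr4.8} of the paper (whose converse direction proves completeness by showing any $H\perp F_{i,n}$ vanishes fiber by fiber), and with that substitution your proof is complete and matches the paper's.
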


\section{Analysis of spectral sets}

We begin with a few lemmas and propositions that exploit the periodicity of the spectrum to give some information about the structure of $\Omega$. We show in Proposition \ref{pr4.7} that if a set $\Omega$ with $|\Omega|=1$, has a spectrum $\Lambda$ of period $p$, $\Lambda=\{0=\lambda_0,\dots,\lambda_{p-1}\}+p\bz$, then for a.e. $x\in\Omega$ the set $\Omega_x=\{k\in\bz: x+\frac kp\in\Omega\}$ has exactly $p$ elements and the sets $\Omega_x$ have a common spectrum $\frac{1}p\{\lambda_0,\dots,\lambda_{p-1}\}$. This will enable us to use the Universal Tiling Conjecture to show that, in this case, the sets $\Omega_x$ have a common tiling set in $\bz$ and therefore $\Omega$ is a tile for $\br$ .

\begin{proposition}\label{pr3.9}
If $\Omega=\cup_{i=1}^n(\alpha_i,\beta_i)$, $\alpha_1<\beta_1<\alpha_2<\beta_2<\dots<\alpha_n<\beta_n$ is spectral, $p\in\bn$ and $\alpha_i,\beta_i\in\frac1p\bz$ for all $i=1,\dots,n$, then any spectrum $\Lambda$ for $\Omega$ has $p$ as a period. 
\end{proposition}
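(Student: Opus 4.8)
The plan is to read off the zero set of the Fourier transform of $\chi_\Omega$ directly from the hypothesis that the endpoints lie in $\frac1p\bz$, and then to play that zero set against the fact that $\{e_\lambda:\lambda\in\Lambda\}$ is not merely orthogonal but \emph{complete}. First I would record the arithmetic structure of $\Omega$: since every $\alpha_i,\beta_i\in\frac1p\bz$, the set $\Omega$ is, up to measure zero, a finite union of grid intervals, $\Omega=\bigcup_{j\in A}\left(\frac jp,\frac{j+1}p\right)$ for a finite set $A\subset\bz$. Writing $\fo{\chi_\Omega}(\xi)=\int_\Omega e^{-2\pi i\xi x}\,dx$, this yields the factorization
\begin{equation}
\fo{\chi_\Omega}(\xi)=\frac{1-e^{-2\pi i\xi/p}}{2\pi i\xi}\,m_A(\xi),\qquad m_A(\xi):=\sum_{j\in A}e^{-2\pi i\xi j/p}.
\label{eq:factor}
\end{equation}
The decisive feature of \eqref{eq:factor} is that the second factor $m_A$ is $p$-periodic, while the first factor vanishes exactly on $p\bz\setminus\{0\}$. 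Hence $\fo{\chi_\Omega}$ vanishes on all of $p\bz\setminus\{0\}$, and its zero set $Z:=\{\xi\neq0:\fo{\chi_\Omega}(\xi)=0\}$ is carried into itself by the shift $\xi\mapsto\xi+p$, with the sole exception of $\xi=-p$, which is sent to $0$, where $\fo{\chi_\Omega}(0)=|\Omega|\neq0$.

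Next I would translate the spectrum condition into orthogonality statements. Recalling $\langle e_\lambda,e_{\lambda'}\rangle_{L^2(\Omega)}=\fo{\chi_\Omega}(\lambda-\lambda')$, orthogonality of the basis says $\lambda-\lambda'\in Z$ for all distinct $\lambda,\lambda'\in\Lambda$. Fix $\lambda\in\Lambda$ and set $\mu:=\lambda+p$; the goal is $\mu\in\Lambda$. Suppose not. I then claim $e_\mu\perp e_{\lambda'}$ for \emph{every} $\lambda'\in\Lambda$, i.e. $\fo{\chi_\Omega}\bigl((\lambda-\lambda')+p\bigr)=0$ for all $\lambda'\in\Lambda$. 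For $\lambda'=\lambda$ this reads $\fo{\chi_\Omega}(p)=0$, true since $p\in p\bz\setminus\{0\}$. For $\lambda'\neq\lambda$ we have $\lambda-\lambda'\in Z$, and I split according to which factor in \eqref{eq:factor} vanishes: if $m_A(\lambda-\lambda')=0$, then $p$-periodicity of $m_A$ forces $m_A((\lambda-\lambda')+p)=0$ and the product vanishes; if instead $\lambda-\lambda'\in p\bz\setminus\{0\}$, then $(\lambda-\lambda')+p\in p\bz$ and the first factor annihilates the product \emph{unless} $(\lambda-\lambda')+p=0$.

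This exceptional point $(\lambda-\lambda')+p=0$ is the only real obstacle, since there $\fo{\chi_\Omega}$ does not vanish. But $(\lambda-\lambda')+p=0$ means $\lambda'=\lambda+p=\mu$, contradicting $\mu\notin\Lambda$; so this case cannot occur and the claim holds. Thus $e_\mu$ is a nonzero element of $L^2(\Omega)$ orthogonal to the orthogonal basis $\{e_{\lambda'}:\lambda'\in\Lambda\}$, which is impossible. Hence $\mu=\lambda+p\in\Lambda$, proving $\Lambda+p\subseteq\Lambda$. Running the identical argument with $-p$ in place of $p$ — legitimate since $-p\in p\bz\setminus\{0\}$ and $m_A$ is equally invariant under $\xi\mapsto\xi-p$ — gives $\Lambda-p\subseteq\Lambda$, and the two inclusions combine to $\Lambda+p=\Lambda$, so $p$ is a period. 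I would emphasize that completeness is used essentially: orthogonality alone would not forbid enlarging $\Lambda$. I also note that this argument is self-contained and does not invoke the periodicity theorem of \cite{IoKo12}; rather, it produces the sharper, quantitative periodicity $\Lambda+p=\Lambda$ that is consistent with it.
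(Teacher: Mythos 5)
Your proof is correct, and at its core it is the same argument as the paper's: the paper derives directly the identity $\ip{e_{\lambda+p}}{e_{\lambda'}}_{L^2(\Omega)}=\frac{\lambda-\lambda'}{\lambda+p-\lambda'}\,\ip{e_{\lambda}}{e_{\lambda'}}_{L^2(\Omega)}$ (valid because the endpoints lie in $\frac1p\bz$, with denominator nonzero precisely because $\lambda+p\notin\Lambda$ is assumed), which is your quasi-periodicity $\fo{\chi_\Omega}(\xi+p)=\frac{\xi}{\xi+p}\fo{\chi_\Omega}(\xi)$ in disguise, and then invokes completeness of $\{e_{\lambda'}:\lambda'\in\Lambda\}$ exactly as you do. Your grid decomposition and factorization of $\fo{\chi_\Omega}$ merely repackage that computation; the one minor genuine addition is your explicit treatment of the reverse inclusion $\Lambda-p\subseteq\Lambda$, which the paper leaves implicit.
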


\begin{proof}
Let $\Lambda$ be a spectrum for $\Omega$. Take $\lambda\in\Lambda$ and assume $\lambda+p\not\in\Lambda$. We have, for $\lambda'\in\Lambda$:
$$\ip{e_{\lambda+p}}{e_{\lambda'}}_{L^2(\Omega)}=\frac{1}{|\Omega|}\sum_{i=1}^n\frac{1}{2\pi i(\lambda+p-\lambda')}(e^{2\pi i (\lambda+p-\lambda')\beta_i}-e^{2\pi i(\lambda+p-\lambda')\alpha_i})$$$$=\frac{1}{2|\Omega|\pi i(\lambda+p-\lambda')}\sum_{i=1}^n(e^{2\pi i (\lambda-\lambda')\beta_i}-e^{2\pi i(\lambda-\lambda')\alpha_i})=\frac1{|\Omega|}\frac{\lambda-\lambda'}{\lambda+p-\lambda'}\ip{e_\lambda}{e_{\lambda'}}_{L^2(\Omega)}=0$$
for both cases $\lambda\neq\lambda'$ and $\lambda=\lambda'$. But this would contradict the completeness of $\{e_\lambda : \lambda\in\Lambda\}$. 
\end{proof}

\begin{corollary}\label{cor3.9}
If $\Omega=\cup_{i=1}^n(\alpha_i,\beta_i)$, $\alpha_1<\beta_1<\alpha_2<\beta_2<\dots<\alpha_n<\beta_n$ is spectral and $\alpha_i,\beta_i\in\bz$ for all $i=1,\dots,n$ and if a spectrum $\Lambda$ has minimal period $\frac{k}{|\Omega|}$ then $k$ divides $|\Omega|$. 
\end{corollary}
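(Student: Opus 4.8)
The plan is to obtain the divisibility $k \mid |\Omega|$ by combining Proposition \ref{pr3.9} with the elementary group structure of the set of periods of $\Lambda$. The key observation is that, since all endpoints $\alpha_i,\beta_i$ are integers, they lie in $\frac{1}{1}\bz=\bz$; hence Proposition \ref{pr3.9}, applied with the choice $p=1\in\bn$, tells us directly that $1$ is a period of any spectrum $\Lambda$, i.e. $\Lambda+1=\Lambda$. I would also record at the outset that $N:=|\Omega|=\sum_{i=1}^n(\beta_i-\alpha_i)$ is a positive integer, since it is a sum of integer interval lengths.

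Next I would use the fact that the set of periods $P:=\{p\in\br : \Lambda+p=\Lambda\}$ is a subgroup of $(\br,+)$: it contains $0$, and if $p,q\in P$ then $\Lambda+(p-q)=\Lambda$, so $p-q\in P$. Granting, as in the hypothesis, that $\Lambda$ has a minimal positive period $p_0=\frac{k}{N}$, the standard division-algorithm argument shows $P=p_0\bz$. Indeed, for any $p\in P$ write $p=q\,p_0+r$ with $q\in\bz$ and $0\le r<p_0$; then $r=p-q\,p_0\in P$, and minimality of $p_0$ forces $r=0$, so $p=q\,p_0$. Thus every period of $\Lambda$ is an integer multiple of $\frac{k}{N}$.

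Finally, since $1\in P=p_0\bz$, there is an integer $m\ge 1$ with $1=m\cdot\frac{k}{N}$, that is $N=mk$; hence $k$ divides $N=|\Omega|$, which is the claim. I do not expect a genuine obstacle in this argument. The only point that requires care is the passage from ``a minimal positive period exists'' to ``every period is an integer multiple of it,'' which is precisely the cyclic-subgroup-of-$\br$ lemma handled by the division algorithm above; and one should confirm that $|\Omega|$ is genuinely an integer so that the concluding relation $N=mk$ is an integer identity.
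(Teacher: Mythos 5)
Your proof is correct and essentially the same as the paper's: both hinge on applying Proposition \ref{pr3.9} to produce the period $1$ for $\Lambda$ (the paper scales to $\frac1N\Omega$ with endpoints in $\frac1N\bz$ and applies the proposition with $p=N$, which after rescaling says exactly $\Lambda+1=\Lambda$), and then on minimality of the period $\frac kN$ to conclude $k\mid N$. Your version merely skips the normalization and spells out the division-algorithm step that the paper leaves implicit.
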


\begin{proof}
$|\Omega|$ is an integer $N$. Then $\frac{1}{N}\Omega$ has measure 1, endpoints in $\frac1N\bz$ and spectrum $N\Lambda$ with period $k$. By Proposition \ref{pr3.9}, $k$ has to divide $N$. 
\end{proof}
The next proposition can be found in e.g. \cite{BoMa11,LaWa97,Ped96,IoKo12}. We include the proof for the benefit of the reader.
\begin{proposition}\label{pr4.15}
Let $\Omega$ be a bounded Lebesgue measurable set of measure 1. Assume that $\Omega$ is spectral with spectrum $\Lambda$, $0\in\Lambda$, which has period $p$. Then $\Omega$ is a $p$-tile of $\br$ by $\frac{1}{p}\bz$-translations, i.e., for almost every $x\in\br$, there exist exactly $p$ integers $j_1,\dots, j_{p}$ such that $x$ is in $\Omega+\frac{j_i}{p}$, $i=1,\dots,p$.

Also, for a.e. $x$ in $\Omega$, there are exactly $p$ integers $j_1,\dots,j_{p}$ such that $x+\frac{j_i}{p}$ is in $\Omega$ for all $i=1,\dots,p$.  

\end{proposition}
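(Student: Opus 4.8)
The plan is to reduce both assertions to a single statement about the $\tfrac1p\bz$-periodization of $\chi_\Omega$. I would set
$$f(x)=\sum_{j\in\bz}\chi_\Omega\Big(x+\tfrac jp\Big),$$
a $\tfrac1p$-periodic function that counts the number of integers $j$ with $x+\tfrac jp\in\Omega$. Both parts of the proposition say that $f(x)=p$ for a.e.\ $x$: part one is this statement for a.e.\ $x\in\br$ (rewrite $x\in\Omega+\tfrac jp$ as $x-\tfrac jp\in\Omega$ and reindex $j\mapsto -j$), and part two is the same statement restricted to a.e.\ $x\in\Omega$. Hence it suffices to prove $f\equiv p$ a.e.\ on $\br$, after which both parts follow at once, including the fact that the multiplicity is exactly $p$ because $f$ is nonnegative and integer-valued.

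First I would record the Fourier data. Since $\Omega$ is bounded, $\chi_\Omega\in L^1(\br)$, and $f$ is integrable on a period with $\int_0^{1/p}f=|\Omega|=1$, so $f$ is finite a.e. Viewing $f$ as a function on the circle $\br/\tfrac1p\bz$, whose characters are $e^{2\pi i p m x}$ for $m\in\bz$, an unfolding computation (Tonelli justifies interchanging sum and integral, since $\chi_\Omega\ge 0$) gives, for every $m\in\bz$,
$$\int_0^{1/p}f(x)\,e^{-2\pi i p m x}\,dx=\int_\br\chi_\Omega(y)\,e^{-2\pi i p m y}\,dy=\widehat{\chi_\Omega}(pm),$$
where $\widehat{\chi_\Omega}(\xi)=\int_\br\chi_\Omega(x)e^{-2\pi i\xi x}\,dx$; here the phase $e^{2\pi i m j}=1$ makes the telescoping work.

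Next I would use the spectrum to annihilate these coefficients for $m\neq 0$. Because $0\in\Lambda$ and $\Lambda$ has period $p$, periodicity forces $p\bz\subseteq\Lambda$; thus for each $m\neq 0$ the points $0$ and $pm$ are distinct elements of $\Lambda$, and orthogonality of $\{e_\lambda:\lambda\in\Lambda\}$ in $L^2(\Omega)$ yields
$$\widehat{\chi_\Omega}(pm)=\int_\Omega e^{-2\pi i p m x}\,dx=\langle e_0,e_{pm}\rangle_{L^2(\Omega)}=0.$$
So every nonconstant Fourier coefficient of $f$ vanishes, while the $m=0$ coefficient equals $\widehat{\chi_\Omega}(0)=1$. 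Applying the uniqueness theorem for Fourier series of $L^1$ functions on the circle to $f-p$ (all of whose coefficients vanish, using $\int_0^{1/p}f=1$ to match the constant) gives $f=p$ a.e., completing the argument.

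The argument is largely routine; the two points needing care are the justification of the interchange of summation and integration when identifying the periodization's Fourier coefficients, and the appeal to $L^1$-uniqueness rather than the $L^2$-theory, since $f$ is known a priori only to be integrable. I would also remark that only the orthogonality of the exponential system, not its completeness, is used.
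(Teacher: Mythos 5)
Your proposal is correct and follows essentially the same route as the paper: both reduce the two assertions to showing the $\tfrac1p\bz$-periodization of $\chi_\Omega$ equals $p$ a.e., compute its Fourier coefficients by unfolding, kill the nonzero coefficients via orthogonality of $e_0$ and $e_{pm}$ (using $p\bz\subset\Lambda$), and conclude by uniqueness of Fourier coefficients. Your write-up is merely more explicit about the technical justifications (Tonelli, $L^1$-uniqueness) that the paper leaves implicit.
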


\begin{proof}
The statements follow if we prove that
\begin{equation}
\frac{1}{p}\sum_{j\in\bz}\chi_\Omega\left(x+\frac{j}{p}\right)=1\mbox{ a.e. on $[0,\frac1p]$}.
\label{eq4.15.1}
\end{equation}
By assumption, $\Lambda$ contains $0$ so $p\bz\subset\Lambda$. 
We compute the Fourier coefficients: for $l\in\bz$, 
$$p\int_0^{\frac1p}\frac{1}{p}\sum_{j\in\bz}\chi_{\Omega}\left(x+\frac jp\right)e_{-lp}(x)\,dx=\sum_{j\in\bz}\int_{\frac jp}^{\frac{j+1}{p}}\chi_{\Omega}(x)e_{-lp}(x)\,dx=\int_{\br}\chi_{\Omega}(x)e_{-lp}(x)\,dx=\delta_0.$$
This implies \eqref{eq4.15.1}.

\end{proof}

\begin{definition}\label{def4.16}
For $\varphi\in L^\infty(\br)$, define the multiplication operator $M(\varphi)$ on $L^2(\br)$ by $M(\varphi)f=\varphi f$, $f\in L^2(\Omega)$.
\end{definition}

\begin{lemma}\label{lem4.16}
Let $\Omega$ be a bounded Lebesgue measurable set of measure 1. Assume that $\Omega$ is spectral with spectrum $\Lambda$, $0\in\Lambda$ which has period $p$. Let $P(p\bz)$ be the orthogonal projection in $L^2(\Omega)$ onto the closed subspace spanned by $\{e_{kp} : k\in\bz\}$. Then, for $f\in L^2(\Omega)$,
\begin{equation}
(P(p\bz)f)(x)=\frac{1}{p}\sum_{j\in\bz}f\left(x+\frac jp\right),\mbox{ for a.e. $x\in\Omega$}.
\label{eq4.16.1}
\end{equation}
(We define $f$ to be zero outside $\Omega$)

\end{lemma}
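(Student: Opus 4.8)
The plan is to identify the right-hand side of \eqref{eq4.16.1} as the orthogonal projection by showing that both sides are the $L^2(\Omega)$-limit of one and the same sequence of partial Fourier sums. First I would record the structure of the subspace onto which $P(p\bz)$ projects. Since $0\in\Lambda$ and $\Lambda+p=\Lambda$, we have $p\bz\subset\Lambda$; as $\{e_\lambda:\lambda\in\Lambda\}$ is an orthogonal basis and $|\Omega|=1$, the family $\{e_{kp}:k\in\bz\}$ is an orthonormal system in $L^2(\Omega)$ (orthogonal as a sub-family, and $\norm{e_{kp}}_{L^2(\Omega)}^2=|\Omega|=1$). Hence $P(p\bz)f=\sum_{k\in\bz}c_ke_{kp}$ with $c_k=\ip{f}{e_{kp}}_{L^2(\Omega)}$, the series converging in $L^2(\Omega)$; writing $S_N:=\sum_{|k|\le N}c_ke_{kp}$ we have $S_N\to P(p\bz)f$ in $L^2(\Omega)$. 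Because $f$ is extended by $0$ outside $\Omega$, $c_k=\int_{\br}f(x)e^{-2\pi ikpx}\,dx$.

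Next I would introduce $g(x):=\frac1p\sum_{j\in\bz}f(x+\frac jp)$. Since $\Omega$ is bounded the sum is locally finite, so $g$ is well defined a.e. and $\frac1p$-periodic. The essential input from Proposition \ref{pr4.15} is the pointwise tiling identity $\sum_{j\in\bz}\chi_\Omega(x+\frac jp)=p$ a.e., which is just \eqref{eq4.15.1} read off a full period. Using this together with Cauchy--Schwarz gives $g\in L^2([0,\frac1p])$, and an unfolding computation (substitute $y=x+j/p$ and use $e^{2\pi ikj}=1$) shows that the Fourier coefficients of $g$ on $[0,\frac1p]$ with respect to the orthogonal basis $\{e_{kp}\}$ of $L^2([0,\frac1p])$ are exactly the $c_k$ above. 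Consequently $S_N\to g$ in $L^2([0,\frac1p])$.

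The step I expect to be the main obstacle is transferring this $L^2([0,\frac1p])$-convergence to $L^2(\Omega)$, since these are genuinely different measure spaces. Here the tiling identity does the work a second time: for any $\frac1p$-periodic $\phi$ one has, after unfolding,
$$\norm{\phi}_{L^2(\Omega)}^2=\int_{\br}\chi_\Omega|\phi|^2=\int_0^{1/p}|\phi(x)|^2\sum_{j\in\bz}\chi_\Omega\Big(x+\tfrac jp\Big)\,dx=p\,\norm{\phi}_{L^2([0,1/p])}^2.$$
Applying this to the $\frac1p$-periodic function $S_N-g$ yields $\norm{S_N-g}_{L^2(\Omega)}^2=p\,\norm{S_N-g}_{L^2([0,1/p])}^2\to0$, so $S_N\to g$ in $L^2(\Omega)$ as well. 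Comparing the two $L^2(\Omega)$-limits of $S_N$ and invoking uniqueness of the limit gives $P(p\bz)f=g$ a.e.\ on $\Omega$, which is precisely \eqref{eq4.16.1}.

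As a cross-check one can instead argue intrinsically in $L^2(\Omega)$: the same norm identity shows $g\in\Span\{e_{kp}\}$, while a single unfolding computation gives $\ip{g}{e_{kp}}_{L^2(\Omega)}=c_k=\ip{f}{e_{kp}}_{L^2(\Omega)}$, so $f-g\perp e_{kp}$ for every $k$; together these characterize $g$ as the orthogonal projection of $f$. Either way, the crux is the passage between the $\frac1p$-periodic picture and the geometry of $\Omega$, mediated by the $p$-tiling property of Proposition \ref{pr4.15}.
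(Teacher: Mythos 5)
Your proposal is correct and is essentially the paper's argument: the same unfolding computation, powered by the tiling identity of Proposition \ref{pr4.15}, shows that the periodization $g(x)=\frac1p\sum_{j\in\bz}f(x+\frac jp)$ has the same inner products against the $e_{kp}$ as $f$ does, and one then concludes via the characterization of the orthogonal projection onto the closed span of $\{e_{kp}:k\in\bz\}$ --- indeed your closing ``cross-check'' is exactly the paper's proof. The one point where you go beyond the paper is the norm-transfer identity $\norm{\phi}_{L^2(\Omega)}^2=p\,\norm{\phi}_{L^2([0,1/p])}^2$ for $\frac1p$-periodic $\phi$, which rigorously justifies the step the paper merely asserts, namely that the $\frac1p$-periodic function $g$ lies in the $L^2(\Omega)$-closed span of $\{e_{kp}\}$.
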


\begin{proof}

Note first that the function on the right side of \eqref{eq4.16.1} has period $\frac1p$ and therefore it is in the $L^2$-span of the functions $e_{pl}$, $l\in\bz$. 
We have, for $f\in L^2(\Omega)$, $l\in\bz$: 
$$\int_{\Omega}\frac{1}{p}\sum_{j\in\bz}f\left(x+\frac jp\right)e_{-lp}(x)\,dx=\frac{1}{p}\sum_{j\in\bz}\int_{\br}\chi_\Omega\left(x-\frac jp\right)f(x)e_{-lp}(x)\,dx$$$$=\mbox{(with Proposition \ref{pr4.15})}=\int_\Omega f(x)e_{-lp}(x)\,dx=\ip{f}{e_{lp}}_{L^2(\Omega)}.$$

Therefore, since $e_{lp}$ form an orthogonal basis for their span, equation \eqref{eq4.16.1} follows.

\end{proof}

\begin{proposition}\label{pr4.17}
Let $\Omega$ be a bounded Lebesgue measurable set of measure 1. Assume that $\Omega$ is spectral with spectrum $\Lambda$, which has period $p$ and assume $0\in\Lambda$. Let $\Lambda=\{\lambda_0=0,\lambda_1,\dots,\lambda_{p-1}\}+p\bz$ with $\lambda_i\in[0,p)$, $i=0,\dots,p-1$. Then the projection $P(\lambda_i+p\bz)$ onto the span of $\{e_{\lambda_i+kp}: k\in\bz\}$ has the following formula: for $f\in L^2(\Omega)$,
\begin{equation}
(P(\lambda_i+p\bz)f)(x)=e_{\lambda_i}(x)\frac{1}{p}\sum_{j\in\bz}f\left(x+\frac jp\right)e_{-\lambda_i}\left(x+\frac jp\right),\mbox{ for a.e. $x\in\Omega$}.
\label{eq4.17.1}
\end{equation}
\end{proposition}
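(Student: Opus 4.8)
The plan is to derive \eqref{eq4.17.1} from Lemma \ref{lem4.16} by a unitary conjugation, exploiting the factorization $e_{\lambda_i+kp}=e_{\lambda_i}e_{kp}$: multiplication by the unimodular function $e_{\lambda_i}$ carries the family $\{e_{kp}\}$ to the family $\{e_{\lambda_i+kp}\}$, and this single observation drives the whole argument. Concretely, I would first note that $M(e_{\lambda_i})$ is a unitary operator on $L^2(\Omega)$, since $|e_{\lambda_i}(x)|=1$ for every $x$; its adjoint, which is also its inverse, is $M(e_{-\lambda_i})$. Because $M(e_{\lambda_i})e_{kp}=e_{\lambda_i+kp}$ for all $k\in\bz$, this unitary maps the closed span of $\{e_{kp}:k\in\bz\}$ onto the closed span of $\{e_{\lambda_i+kp}:k\in\bz\}$.

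Next I would invoke the elementary fact that if a unitary $U$ carries a closed subspace $V$ onto a closed subspace $W$, then the orthogonal projection onto $W$ equals $UP_VU^{*}$. Taking $U=M(e_{\lambda_i})$, with $V$ the span of $\{e_{kp}\}$ and $W$ the span of $\{e_{\lambda_i+kp}\}$, this gives the operator identity
\begin{equation*}
P(\lambda_i+p\bz)=M(e_{\lambda_i})\,P(p\bz)\,M(e_{-\lambda_i}).
\end{equation*}

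Finally I would apply this identity to an arbitrary $f\in L^2(\Omega)$ and substitute the explicit formula for $P(p\bz)$ supplied by Lemma \ref{lem4.16}, obtaining
\begin{equation*}
(P(\lambda_i+p\bz)f)(x)=e_{\lambda_i}(x)\,\bigl(P(p\bz)(e_{-\lambda_i}f)\bigr)(x)=e_{\lambda_i}(x)\,\frac1p\sum_{j\in\bz}e_{-\lambda_i}\!\left(x+\tfrac jp\right)f\!\left(x+\tfrac jp\right),
\end{equation*}
which is exactly \eqref{eq4.17.1}. I do not anticipate any real difficulty; the only step meriting care is the claim that $M(e_{\lambda_i})$, being bounded and invertible, sends the \emph{closed} span of a total set onto the closed span of its image, so that the subspaces $V$ and $W$ correspond correctly under the conjugation and the projection formula applies.
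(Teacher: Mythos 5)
Your proof is correct and follows essentially the same route as the paper: both establish the conjugation identity $P(\lambda_i+p\bz)=M(e_{\lambda_i})P(p\bz)M(e_{-\lambda_i})$ (the paper via a ``simple check'' on rank-one projections, you via the standard fact that a unitary carrying one closed subspace onto another conjugates the projections) and then substitute the formula from Lemma \ref{lem4.16}. Your explicit justification that $M(e_{\lambda_i})$ maps the closed span of $\{e_{kp}\}$ onto that of $\{e_{\lambda_i+kp}\}$ is a careful spelling-out of what the paper leaves implicit.
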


\begin{proof}
A simple check shows that for $\lambda\in\Lambda$ we have that the rank one projections $P(\lambda)$ onto $e_\lambda$ are related by the following formula:
\begin{equation}
P(\lambda)=M(e_\lambda)P(0)M(e_{-\lambda}).
\label{eq4.17.2}
\end{equation}
Then, we obtain
\begin{equation}
P(\lambda_i+p\bz)=M(e_{\lambda_i})P(p\bz)M(e_{-\lambda_i}).
\label{eq4.17.3}
\end{equation}
Equation \eqref{eq4.17.1} follows from \eqref{eq4.17.3} and \eqref{eq4.16.1}.
\end{proof}

\begin{proposition}\label{pr4.19}
Let $\Lambda=\{\lambda_0=0,\lambda_1,\dots,\lambda_{p-1}\}+p\bz$ be as in Proposition \ref{pr4.17}. For $x\in\br$, let $\Omega_x=\{j\in\bz : x+\frac jp\in\Omega\}$. Then $|\Omega_x|=p$ for a.e. $x\in \br$ and, for $i,i'=0,\dots,p-1$:
\begin{equation}
\frac{1}{p}\sum_{j\in\Omega_x}e^{2\pi i(\lambda_{i'}-\lambda_{i})\frac jp}=\delta_{ii'}\mbox{ for a.e. $x\in\Omega$}.
\label{eq4.19.1}
\end{equation}
In other words , the set $\{\lambda_0,\dots,\lambda_{p-1}\}$ is spectral and, for a.e. $x\in\br$, $\frac1{p}\Omega_x$ is a spectrum for it.

\end{proposition}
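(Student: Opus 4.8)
The plan is to read \eqref{eq4.19.1} off the algebra satisfied by the projections $P(\lambda_i+p\bz)$, using the explicit formula \eqref{eq4.17.1}. Because $\{e_\lambda:\lambda\in\Lambda\}$ is an orthogonal basis, exponentials indexed by distinct cosets $\lambda_i+p\bz$ and $\lambda_{i'}+p\bz$ are mutually orthogonal, so these projections have orthogonal ranges; together with idempotency this yields $P(\lambda_{i'}+p\bz)P(\lambda_i+p\bz)=\delta_{ii'}P(\lambda_i+p\bz)$. The cardinality assertion $|\Omega_x|=p$ for a.e.\ $x$ is immediate from \eqref{eq4.15.1} in Proposition \ref{pr4.15}, since that identity reads $\sum_{j\in\bz}\chi_\Omega(x+\frac jp)=p$; as the count is invariant under $x\mapsto x+\frac1p$, it holds for a.e.\ $x\in\br$.

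The core step is to evaluate the composition on the left using \eqref{eq4.17.1} twice. Writing $g_i(x)=\frac1p\sum_{j\in\bz}f(x+\frac jp)e_{-\lambda_i}(x+\frac jp)$, which is $\frac1p$-periodic, we have $(P(\lambda_i+p\bz)f)(x)=e_{\lambda_i}(x)g_i(x)$. Feeding this back into \eqref{eq4.17.1}, using the periodicity of $g_i$, the vanishing of $f$ off $\Omega$ (so that only indices $m\in\Omega_x$ survive), the phase identity $e_{\lambda_i}(x+\frac mp)e_{-\lambda_{i'}}(x+\frac mp)=e^{2\pi i(\lambda_i-\lambda_{i'})(x+m/p)}$ and the simplification $e_{\lambda_{i'}}(x)e^{2\pi i(\lambda_i-\lambda_{i'})x}=e_{\lambda_i}(x)$, I expect to arrive at
\begin{equation}
\big(P(\lambda_{i'}+p\bz)P(\lambda_i+p\bz)f\big)(x)=\big(P(\lambda_i+p\bz)f\big)(x)\,\frac1p\sum_{m\in\Omega_x}e^{2\pi i(\lambda_i-\lambda_{i'})\frac mp},\qquad\text{a.e. }x\in\Omega.
\end{equation}
Comparing with the composition identity forces the scalar factor $\frac1p\sum_{m\in\Omega_x}e^{2\pi i(\lambda_i-\lambda_{i'})m/p}$ to equal $\delta_{ii'}$ wherever $(P(\lambda_i+p\bz)f)(x)\neq0$. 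Taking the test function $f=e_{\lambda_i}$, which lies in the range of $P(\lambda_i+p\bz)$ (so $P(\lambda_i+p\bz)e_{\lambda_i}=e_{\lambda_i}$) and is nowhere zero, lets me cancel this factor and conclude \eqref{eq4.19.1} for a.e.\ $x\in\Omega$ (after the harmless relabeling $i\leftrightarrow i'$, equivalently complex conjugation).

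For the spectral reformulation, \eqref{eq4.19.1} says exactly that the $p\times p$ matrix $M$ with entries $M_{ij}=e^{2\pi i\lambda_i j/p}$, $i=0,\dots,p-1$, $j\in\Omega_x$, satisfies $\frac1p MM^*=I$; since $M$ is square, this is equivalent to $\frac1p M^*M=I$, which is precisely the orthogonality of $\{e_{j/p}:j\in\Omega_x\}$ in $L^2(\delta_A)$ for $A=\{\lambda_0,\dots,\lambda_{p-1}\}$, and as there are $p=\dim L^2(\delta_A)$ such vectors they form an orthogonal basis, i.e.\ $\frac1p\Omega_x$ is a spectrum for $A$. To upgrade ``a.e.\ $x\in\Omega$'' to ``a.e.\ $x\in\br$'' one notes that the off-diagonal sums only acquire a unimodular phase under $x\mapsto x+\frac1p$ (the diagonal entries being automatically $|\Omega_x|/p=1$), so the validity of \eqref{eq4.19.1} is invariant modulo $\frac1p$, while a.e.\ $x\in[0,\frac1p)$ has a translate $x+\frac jp\in\Omega$ since $|\Omega_x|=p\ge1$. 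The main obstacle I anticipate is purely the bookkeeping in the double application of \eqref{eq4.17.1}: correctly propagating the $\frac1p$-periodicity of $g_i$, restricting the inner summation to $\Omega_x$, and collapsing the $x$-dependent phases into the clean factor $\frac1p\sum_{m\in\Omega_x}e^{2\pi i(\lambda_i-\lambda_{i'})m/p}$; once that computation is carried out, the remaining steps are formal.
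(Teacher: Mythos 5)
Your proof is correct and follows essentially the same route as the paper: the paper obtains \eqref{eq4.19.1} by applying the projection formula of Proposition \ref{pr4.17} to the test exponential $e_{\lambda_{i'}}$ together with the fact that $P(\lambda_i+p\bz)e_{\lambda_{i'}}=\delta_{ii'}e_{\lambda_{i'}}$ (orthogonality of the spectrum), which is exactly what your composition identity $P(\lambda_{i'}+p\bz)P(\lambda_i+p\bz)=\delta_{ii'}P(\lambda_i+p\bz)$ reduces to once you specialize $f=e_{\lambda_i}$, so the nowhere-vanishing cancellation step is the same. The additional details you supply---the square-matrix duality $\frac1p MM^*=I\iff\frac1p M^*M=I$ behind the ``in other words'' clause, and the phase-invariance argument upgrading \eqref{eq4.19.1} from a.e.\ $x\in\Omega$ to a.e.\ $x\in\br$---merely make explicit what the paper leaves implicit, rather than constituting a different method.
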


\begin{proof}
The first statement is contained in Proposition \ref{pr4.15}. Equation \eqref{eq4.19.1} follows from Proposition \ref{pr4.17} and the fact that $P(\lambda_i+p\bz)e_{\lambda_{i'}}=0$ for $i\neq i'$, because $\Lambda$ is a spectrum.
\end{proof}

\begin{proposition}\label{pr4.7}
Let $\Omega$ be a bounded Lebesgue measurable set of measure 1. Assume that $\Omega$ is spectral with spectrum $\Lambda$ as in Proposition \ref{pr4.17} and, for $x\in\Omega$ define $\Omega_x\subset\bz$, $0\in\Omega_x$ as in Proposition \ref{pr4.19}. The sets $\Omega_x$ have a common spectrum $\frac1p\{\lambda_0=0,\lambda_1,\dots,\lambda_{p-1}\}$ for a.e. $x\in\Omega$. Assume in addition that the sets $\Omega_x$ tile $\bz$ by a common tiling set $\mathcal T\subset \bz$. 
Then $\Omega$ tiles $\br$ by $\frac1p\mathcal T$. 

\end{proposition}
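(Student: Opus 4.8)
The first assertion---that $\frac1p\{\lambda_0,\dots,\lambda_{p-1}\}$ is a common spectrum for the sets $\Omega_x$---is precisely the content of Proposition \ref{pr4.19}; for $x\in\Omega$ one has $0\in\Omega_x$, so the stated normalization holds automatically. The real content is therefore the tiling conclusion, and the plan is to convert the integer tiling of each $\Omega_x$ by $\mathcal T$ into a tiling of $\br$ by the rescaled set $\frac1p\mathcal T$, working fiber by fiber along the cosets of $\frac1p\bz$.

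First I would record the translation identity $\Omega_{x+\frac kp}=\Omega_x-k$ for $k\in\bz$, which is immediate from the definition of $\Omega_x$. Because tiling of $\bz$ by a fixed set $\mathcal T$ is unaffected by translating the tile, this identity shows that the property ``$\Omega_x$ tiles $\bz$ by $\mathcal T$'' depends only on the class of $x$ modulo $\frac1p$. The main obstacle is the next step: the hypothesis is stated for a.e.\ $x\in\Omega$, but to describe a.e.\ point of $\br$ I need it for a.e.\ $x_0\in[0,\frac1p)$. To bridge this I would let $B\subset\Omega$ be the null set on which $\Omega_x$ fails to tile, and $B'\subset[0,\frac1p)$ the set of $x_0$ with $|\Omega_{x_0}|=p$ for which $\Omega_{x_0}$ fails to tile. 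For $x_0\in B'$ and $j\in\Omega_{x_0}$ the point $x_0+\frac jp$ lies in $\Omega$ and, by the translation identity, in $B$. Slicing the set $\{x_0+\frac jp:x_0\in B',\ j\in\Omega_{x_0}\}\subseteq B$ over the intervals $[\frac kp,\frac{k+1}p)$ and using $|\Omega_{x_0}|=p$ from Proposition \ref{pr4.15}, its measure comes out to $p\,|B'|$, whence $p\,|B'|\le|B|=0$ and so $|B'|=0$.

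With the tiling now available for a.e.\ $x_0\in[0,\frac1p)$, the verification is a direct fiber computation. For a.e.\ $y\in\br$ I write $y=x_0+\frac mp$ with $x_0\in[0,\frac1p)$ and $m\in\bz$, and for a.e.\ such $y$ the base point $x_0$ avoids $B'$. The identity $\chi_\Omega(y-\frac tp)=\chi_\Omega(x_0+\frac{m-t}p)=\chi_{\Omega_{x_0}}(m-t)$ converts the covering multiplicity of $\{\Omega+\frac tp:t\in\mathcal T\}$ at $y$ into that of $\{\Omega_{x_0}+t:t\in\mathcal T\}$ at the integer $m$:
$$\sum_{t\in\mathcal T}\chi_\Omega\Big(y-\frac tp\Big)=\sum_{t\in\mathcal T}\chi_{\Omega_{x_0}}(m-t)=1,$$
the final equality being the assumed tiling of $\bz$ by $\Omega_{x_0}$. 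Since this holds for a.e.\ $y$, the translates $\{\Omega+\frac tp:t\in\mathcal T\}$ partition $\br$ up to measure zero, i.e.\ $\Omega$ tiles $\br$ by $\frac1p\mathcal T$.
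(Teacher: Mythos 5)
Your proof is correct and follows essentially the same route as the paper's: both transfer the hypothesis that the fibers $\Omega_{x_0}$ tile $\bz$ by $\mathcal T$ into a tiling of $\br$ by $\frac1p\mathcal T$, working fiber by fiber along the cosets of $\frac1p\bz$ and using Proposition \ref{pr4.15} for the $p$-point fiber structure. The only differences are presentational: you merge the paper's separate covering and disjointness steps into a single multiplicity identity $\sum_{t\in\mathcal T}\chi_\Omega(y-\frac tp)=1$, and you make explicit the a.e.\ transfer from $\Omega$ to $[0,\frac1p)$ (your $B'$ null-set argument), a bookkeeping point the paper's proof glosses over.
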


\begin{proof}
The fact that $\frac1p\Omega_x$ have the common spectrum $\{\lambda_i\}$ is contained in Proposition \ref{pr4.19}. We focus on the tiling property. We can assume $0\in\mathcal T$. 
We know from Proposition \ref{pr4.15} that $\Omega$ $p$-tiles $\br$ by $\frac1p\bz$. 
Take $y\in\br$. Then there exists $k\in\bz$ and $x\in\Omega$ such that $x+\frac kp=y$. Since $\Omega_x$ tiles $\bz$ with $\mathcal T$, there exist $j\in\Omega_x$ and $t\in\mathcal T$ such that $k=j+t$. Then 
$x+\frac jp\in\Omega$ and $x+\frac jp+\frac tp=y$. So $\cup_{t\in\mathcal T}(\Omega+\frac tp)$ covers $\br$. 

Now assume $y=x_1+\frac{t_1}p=x_2+\frac{t_2}p$ with $x_1,x_2\in\Omega$ and $t_1,t_2\in\mathcal T$. Then $x_1-x_2\in\frac1p\bz$ and therefore, there exists $j\in\Omega_{x_1}$ such that $x_2=x_1+\frac jp$. Also $0\in\Omega_{x_1}$ since $x_1\in\Omega$. Then 
$x_1+\frac jp+\frac{t_2}p=x_1+\frac{t_1}p$ so $\frac jp+\frac{t_2}p=0+\frac{t_1}p$. But since $\Omega_{x_1}$ tiles with $\mathcal T$, this implies $j=0$ and $t_1=t_2$, so $x_1=x_2$. Thus the sets $\Omega+t$ are mutually disjoint.
\end{proof}

\begin{proposition}\label{pr4.8}
Let $\Omega$ be a bounded Lebesgue measurable subset of $\br$ with $|\Omega|=1$. Let $p\in\bn$. Suppose $\Omega$ $p$-tiles $\br$ by $\frac1p\bz$. Then, for a.e. $x\in\br$ the set 
\begin{equation}
\Omega_x:=\left\{k\in\bz : x+\frac kp\in\Omega\right\}
\label{eq4.8.1}
\end{equation}
has exactly $p$ elements 
\begin{equation}
\Omega_x=\{k_0(x)<k_1(x)<\dots<k_{p-1}(x)\}.
\label{eq4.8.2}
\end{equation}
For almost every $x\in\Omega$ there exist unique $y\in [0,\frac1p)$ and $i\in\{0,\dots,p-1\}$ such that $y+\frac{k_i(y)}p=x$. 

The functions $k_i$ have the following property
\begin{equation}
k_i(x+\frac1p)=k_i(x)-1,\quad(x\in\br,i=0,\dots,p-1).
\label{eq4.8.2.1}
\end{equation}

Consider the space of $\frac1p$-periodic vector valued functions $L^2([0,\frac1p),\bc^p)$. 
The operator $W:L^2(\Omega)\rightarrow L^2([0,\frac1p),\bc^p)$ defined by
\begin{equation}
(Wf)(x)=\begin{pmatrix}f\left(y+\frac{k_{0}(y)}p\right)\\
\vdots\\
f\left(y+\frac{k_{p-1}(y)}p\right)
\end{pmatrix},\quad(y\in[0,\frac1p),f\in L^2(\Omega)),
\label{eq4.8.3}
\end{equation}
is an isometric isomorphism with inverse
\begin{equation}
W^{-1}\begin{pmatrix}
f_0\\
\vdots\\
f_{p-1}\end{pmatrix}(x)=f_i(y),\mbox{ if }x=y+\frac{k_i(y)}p,\mbox{ with }y\in[0,\frac1p), i\in\{0,\dots,p-1\}.
\label{eq4.8.4}
\end{equation}

A set $\Lambda$ of the form $\Lambda=\{0=\lambda_0,\lambda_1,\dots,\lambda_{p-1}\}+p\bz$ is a spectrum for $\Omega$ if and only if $\{\lambda_0,\dots,\lambda_{p-1}\}$ is a spectrum for $\frac1p\Omega_x$ for a.e. $x\in[0,\frac1p)$. 

The exponential functions are mapped by $W$ as follows:
\begin{equation}
(We_{\lambda_i+np})(y)=e_{\lambda_i+np}(y)\begin{pmatrix}e_{\lambda_i}(\frac{k_0(y)}p)\\
\vdots\\
e_{\lambda_i}(\frac{k_{p-1}(y)}p)
\end{pmatrix}=:F_{i,n}(y),\quad (i=0,\dots,p-1, n\in\bz,y\in[0,\frac1p)).
\label{eq4.8.5}
\end{equation}

%
%
%

\end{proposition}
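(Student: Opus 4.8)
The plan is to establish the seven assertions in the order listed, all flowing from the $p$-tiling hypothesis, which reads $\frac1p\sum_{j\in\bz}\chi_\Omega(x+\frac jp)=1$ a.e. First, since the number of integers $k$ with $x+\frac kp\in\Omega$ equals $\sum_{k\in\bz}\chi_\Omega(x+\frac kp)=p$ for a.e. $x$, the set $\Omega_x$ has exactly $p$ elements a.e., so one may list them increasingly as in \eqref{eq4.8.2}. The functions $k_i$ are measurable because $\{x:k_i(x)=m\}$ is cut out by the measurable conditions $\chi_\Omega(x+\frac mp)=1$ together with ``exactly $i$ of the indices $l<m$ satisfy $\chi_\Omega(x+\frac lp)=1$.'' The structural identity is $\Omega_{x+\frac1p}=\Omega_x-1$, obtained by reindexing $x+\frac1p+\frac kp=x+\frac{k+1}p$; reading off ordered elements yields \eqref{eq4.8.2.1}. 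For the claim preceding \eqref{eq4.8.3} I would write any $x\in\Omega$ uniquely as $x=y+\frac mp$ with $y\in[0,\frac1p)$ and $m\in\bz$; then $x\in\Omega$ forces $m\in\Omega_y$, so $m=k_i(y)$ for a unique $i$, and $y,m,i$ are determined by $x$. Equivalently, $(y,i)\mapsto y+\frac{k_i(y)}p$ is a bijection (mod null sets) from $[0,\frac1p)\times\{0,\dots,p-1\}$ onto $\Omega$.

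To prove $W$ is an isometry I would compute $\|Wf\|^2=\int_0^{1/p}\sum_{i=0}^{p-1}|f(y+\frac{k_i(y)}p)|^2\,dy$ and evaluate it by partitioning $[0,\frac1p)$ into the level sets $\{k_i=m\}$, on each of which $y\mapsto y+\frac mp$ is a translation; summing over $i$ and $m$ and invoking the bijection above recovers $\int_\Omega|f|^2$. The explicit formula \eqref{eq4.8.4} is then verified to be a two-sided inverse, so $W$ is a surjective isometry, hence unitary. For \eqref{eq4.8.5} I substitute $\lambda=\lambda_i+np$ into $(We_\lambda)(y)$: the common factor $e^{2\pi i(\lambda_i+np)y}$ comes out as $e_{\lambda_i+np}(y)$, while in the $j$-th entry $e^{2\pi i(\lambda_i+np)k_j(y)/p}=e^{2\pi i\lambda_i k_j(y)/p}$ since $nk_j(y)\in\bz$; this is exactly $F_{i,n}(y)$.

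For the spectral equivalence I exploit unitarity of $W$: $\{e_\lambda:\lambda\in\Lambda\}$ is an orthogonal basis of $L^2(\Omega)$ if and only if $\{F_{i,n}:i=0,\dots,p-1,\ n\in\bz\}$ is an orthogonal basis of $L^2([0,\frac1p),\bc^p)$. A direct computation gives $\ip{F_{i,n}}{F_{i',n'}}=\int_0^{1/p}e^{2\pi i(\lambda_i-\lambda_{i'}+(n-n')p)y}G_{i,i'}(y)\,dy$, where $G_{i,i'}(y):=\sum_{k\in\Omega_y}e^{2\pi i(\lambda_i-\lambda_{i'})k/p}$ equals $\ip{e_{\lambda_i}}{e_{\lambda_{i'}}}_{L^2(\delta_{\frac1p\Omega_y})}$. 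Since $\{e^{2\pi i mpy}\}_{m\in\bz}$ is a complete orthogonal system on $[0,\frac1p)$ and each $G_{i,i'}$ is bounded, for fixed $i\neq i'$ the vanishing of these inner products for all $n,n'$ is equivalent to $G_{i,i'}(y)=0$ for a.e. $y$; the diagonal $G_{ii}(y)=|\Omega_y|=p$ is automatic. As $\frac1p\Omega_y$ has exactly $p$ points and $\{\lambda_i\}$ has $p$ elements, this orthogonality is precisely the statement that $\{\lambda_i\}$ is a spectrum for $\frac1p\Omega_y$ a.e. For the completeness part of the ``spectrum implies basis'' direction I note that when $G_{i,i'}(y)=p\delta_{ii'}$ the vectors $w_i(y):=e_{\lambda_i}(y)\,(e_{\lambda_i}(\tfrac{k_0(y)}p),\dots,e_{\lambda_i}(\tfrac{k_{p-1}(y)}p))^{\top}$ form an orthogonal basis of $\bc^p$ a.e., so every element of $L^2([0,\frac1p),\bc^p)$ lies in the closed span of the $F_{i,n}$.

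I expect the main obstacle to be bookkeeping rather than conceptual. The two delicate points are: carefully justifying the change of variables underlying the isometry of $W$, i.e.\ the measurability of the $k_i$ and the disjointness of the level-set pieces realizing $\Omega$ as the image of $[0,\frac1p)\times\{0,\dots,p-1\}$; and, in the spectral equivalence, rigorously converting the ``for all $n,n'$'' orthogonality conditions into the pointwise-a.e.\ identity $G_{i,i'}(y)=p\delta_{ii'}$ via completeness of the Fourier system on $[0,\frac1p)$, while simultaneously controlling completeness of the family $\{F_{i,n}\}$ through the orthogonal vector fields $w_i(y)$.
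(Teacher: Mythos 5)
Your proposal is correct, and most of it (the counting of $\Omega_x$, measurability of the $k_i$, the change-of-variables proof that $W$ is a surjective isometry, and the completeness argument via the pointwise orthogonal bases $w_i(y)$ together with Fourier completeness of $\{e_{np}\}$ on $[0,\frac1p)$) matches the paper's proof, up to cosmetic differences such as partitioning $[0,\frac1p)$ by the level sets $\{k_i=m\}$ rather than by the sets $A_S=\{x:\Omega_x=S\}$. The genuine difference is in the spectral equivalence, specifically the forward direction. The paper does not compute inner products of the $F_{i,n}$ at all for that direction: it simply cites Proposition \ref{pr4.19}, which in turn rests on the projection machinery of Propositions \ref{pr4.15}--\ref{pr4.17} (the periodization formula for $P(p\bz)$ and the conjugation identity $P(\lambda_i+p\bz)=M(e_{\lambda_i})P(p\bz)M(e_{-\lambda_i})$). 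You instead compute
$\ip{F_{i,n}}{F_{i',n'}}=\int_0^{1/p}e^{2\pi i(\lambda_i-\lambda_{i'}+(n-n')p)y}G_{i,i'}(y)\,dy$
and use completeness of $\{e^{2\pi impy}\}_{m\in\bz}$ to convert vanishing for all $n,n'$ into $G_{i,i'}=0$ a.e., then use the dimension count $|\Omega_y|=p$ to upgrade orthogonality to the spectrum property. This makes both directions of the equivalence flow from a single computation and renders the proposition self-contained, avoiding the spectral-projection apparatus entirely; the trade-off is that the paper's route is more economical in context, since Propositions \ref{pr4.15}--\ref{pr4.19} are established anyway and reused elsewhere (e.g.\ in Proposition \ref{pr4.7} and the proofs of the main theorems). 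One small point of care in your version: the step from ``$w_i(y)$ is an orthogonal basis of $\bc^p$ a.e.'' to completeness of $\{F_{i,n}\}$ still requires the duality argument (take $H\perp F_{i,n}$ for all $i,n$, deduce $\ip{H(y)}{w_i(y)}=0$ a.e.\ via Fourier completeness, conclude $H=0$ a.e.), which you flag but do not write out; that is exactly the argument the paper supplies.
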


\begin{proof}
The first statement follows from the fact that $\Omega$ $p$-tiles $\br$ by $\frac1p\bz$. The second statement follows from this and the fact that $[0,\frac1p)$ tiles $\br$ by $\frac1p\bz$.

To check that $W$ and $W^{-1}$, as defined, are inverse to each other requires just a simple computation. We verify that $W$ is isometric. 

For a subset $S$ of $\bz$ with $|S|=p$ define 
$$A_S:=\{x\in[0,\frac1p) : \Omega_x=S\}.$$
To see that $A_S$ is measurable, note first that the maps $k_i:\br\rightarrow\bz$ are measurable; this can be proved by induction on $i$, first we have $\{x: k_0(x)=m\}=\{x: x+\frac mp\in\Omega, x+\frac np\not\in\Omega\mbox{ for all }n<m\}$. Then $\{x: k_1(x)=m\}=\{x: k_0(x)<m, x+\frac mp\in\Omega, x+\frac np\not\in\Omega\mbox{ for all } n=k_0(x)+1,\dots, m-1\}$ etc. For a set $S=\{s_0<\dots<s_{p-1}\}$, the set $A_S$ is given by $\{x: k_0(x)=s_0,\dots, k_{p-1}(x)=s_{p-1}\}$ hence it is measurable.

Note that, since $\Omega$ is bounded, $A_S=\ty$ for all but finitely many sets $S$. Also we have the following partition of $\Omega$. 
$$\bigcup_{|S|=p}(A_S+\frac1p S)=\Omega.$$

Take $f\in L^2(\Omega)$.  We have 
$$\|Wf\|_{L^2([0,\frac1p),\bc^p)}^2=\int_0^{\frac1p}\sum_{j=0}^{p-1}\left|f\left(x+\frac{k_j(x)}p\right)\right|^2\,dx=
\sum_{|S|=p}\int_{A_S}\sum_{j=0}^{p-1}\left|f\left(x+\frac{k_j(x)}p\right)\right|^2\,dx$$
$$=\sum_{|S|=p}\int_{A_S}\sum_{s\in S}\left|f\left(x+\frac{s}p\right)\right|^2\,dx=\sum_{|S|=p}\sum_{s\in S}\int_{A_S+\frac sp}|f(x)|^2\,dx=\int_{\Omega}|f(x)|^2\,dx.$$

Equation \eqref{eq4.8.5} requires just a simple computation. 

If $\Lambda$ is a spectrum for $\Omega$, then we saw in Proposition \ref{pr4.19} that $\frac1p\Omega_x$ has spectrum $\{\lambda_0,\dots,\lambda_{p-1}\}$ for a.e. $x\in\br$.

For the converse, if $\{\lambda_0,\dots,\lambda_{p-1}\}$ is a spectrum for $\frac1p\Omega_x$ for a.e. $x\in[0,\frac1p)$, then for a.e. $x\in [0,\frac1p)$,the vectors $v_i(x)=\frac1{\sqrt{p}}(e_{\lambda_i}(\frac{k_0(x)}p),\dots,e_{\lambda_i}(\frac{k_{p-1}(x)}p))^t$, $i=0,\dots,p-1$, form an orthonormal basis for $\bc^p$. Then the functions $F_{i,n}$ in \eqref{eq4.8.5} form an orthonormal system for $L^2([0,\frac1p),\bc^p)$ as can be seen by a short computation. To see that the functions $F_{i,n}$ span the entire Hilbert space, take $H$ in $L^2([0,\frac1p),\bc^p)$ such that $H\perp F_{i,n}$ for all $i=0,\dots,p-1$, $n\in\bz$. Then 
$$0=\int_0^{\frac1p}e_{\lambda_i+np}(x)\ip{H(x)}{v_i(x)}_{\bc^p}\,dx.$$
Since the functions $e_{np}$ are complete in $L^2[0,\frac1p)$ it follows that $e_{\lambda_i}(x)\ip{H(x)}{v_i(x)}_{\bc^p}=0$ for a.e. $x\in[0,\frac1p)$. So $\ip{H(x)}{v_i(x)}=0$ for a.e. $x\in[0,\frac1p)$ and all $i=0,\dots,p-1$. Then $H(x)=0$ for a.e. $x\in[0,\frac1p)$. 

%
%
%
%
%
%
%
%
%
%
%

\end{proof}

We can prove now our main results.

\begin{proof}[Proof of Theorem \ref{th4.8}]
Assume (i). Let $\Omega$ be a bounded Lebesgue measurable spectral set. By scaling we can assume $|\Omega|=1$. Then $\Omega$ has a spectrum of period $p$. The sets $\Omega_x$ in Proposition \ref{pr4.7} are obviously bounded by a common number $K$, since $\Omega$ is bounded, so they form a finite family. Then just apply the UTC($p$) conjecture to the sets $\Omega_x$ and use Proposition \ref{pr4.7}. We get also that $\Omega$ tiles by a subset of $\frac1p\bz$.

For the converse, assume (ii) holds. Let $p\in\bn$. Take $\Gamma=\{\lambda_0=0,\dots,\lambda_{p-1}\}$ which has a spectrum of the form $\frac1pA$ with $A\subset\bz$. Take a finite family of sets $A_1,\dots,A_n$ in $\bz$ such that $\frac1pA_i$ is a spectrum for $\Gamma$.

Now pick $0=r_1<r_2<\dots<r_n<r_{n+1}=\frac1p$ with the property that $r_i-r_j\not\in\mathbb Q$ unless $i=j$ or $\{i,j\}=\{1,n+1\}$. Define the set $\Omega$ as follows:
$$\Omega:=\bigcup_{i=1}^n\left([r_i,r_{i+1})+\frac1pA_i\right).$$

For every $x\in[0,\frac1p)$, the sets $\Omega_x$ defined in Proposition \ref{pr4.19} are among the sets $A_i$. Using Proposition \ref{pr4.8}, since $\Gamma$ is a spectrum for all the sets $\frac1p\Omega_x$, we get that $\Omega$ is spectral with spectrum $\Gamma+p\bz$. By hypothesis $\Omega$ tiles $\br$ with some set $\mathcal T$. We can assume that $0\in\mathcal T$. 

We prove first that $\mathcal T$ is contained in $\frac1p\bz$. Note that $|\Omega|=1$. By \cite[Theorem 1 and 2]{LaWa96}, we know that $\mathcal T$ is periodic with some integer period $s$ and $\mathcal T=\{0=t_0,t_1,\dots,t_{J}\}+s\bz$ with $t_i$ rational for all $i$.

We claim that, if $\cj{(\Omega+t)}\cap\cj{(\Omega+t')}\neq \ty$ then $t-t'\in\frac1p\bz$. Indeed, if this is the case, from the tiling property, we see that either $t=t'$ or two intervals, one in $\Omega+t$, one in $\Omega+t'$ have a common endpoint. So we have $r_{i+1}+\frac{s_{i}}{p}+t=r_j+\frac{s_j}{p}+t'$ for some $i,j\in\{1,\dots,n\}$, $s_i\in A_i, s_j\in A_j$. But, since $t,t'\in\mathbb Q$, we get that $r_{i+1}-r_j$ is in $\mathbb Q$. So $i+1=j$ or $\{i+1,j\}=\{1,n+1\}$. In both cases $r_{i+1}-r_j\in\frac1p\bz$. Then $t-t'\in\frac1p\bz$. 

If $J>0$ then $\Omega+s\bz$ does not exhaust $\br$, so there exists an element $0\neq t_{j_1}\in\{t_0,\dots,t_J\}$ and $s_0,s_1\in\bz$ such that $\cj{(\Omega+s_0)}\cap\cj{(\Omega+s_1+t_{j_1})}\neq\ty$, and hence we obtain $t_{j_1}\in\frac1p\bz$. Making one more step in a similar manner, if $J>1$ then $\Omega+(\{0,t_{j_1}\}+s\bz)$ does not exhaust $\br$, so there exist a $t_{j_2}$ and $s_2$ such that $\cj{(\Omega+s_2+t_{j_2})}$ intersects nontrivially either a set of the form $\cj{(\Omega+s_0')}$ or a set of the form $\cj{(\Omega+s_1'+t_{j_1})}$. In both cases we conclude that $t_{j_2}\in\frac1p\bz$. Continuing this way we conclude that $\{0=t_0,t_1,\dots,t_J\}\subset \frac1p\bz$, and hence $\mathcal T\subset\frac1p\bz$.


We prove now that $\frac1p A_i$ tiles $\frac1p\bz$ by $\mathcal T$, for all $i$, which proves (i). 

Let $k\in\bz$ and pick $y\in (r_i,r_{i+1})$ arbitrarily. Then there exists a unique $t\in\mathcal T$ such that $y+\frac kp\in\Omega+t$. So there exists $1\leq j\leq n$, $y'\in[r_j,r_{j+1})$ and $s_j\in A_j$ such that 
$y+\frac kp=y'+\frac{s_j}p+t$. This implies $y-y'\in\frac1p \bz$ which means that $y=y'$ and $i=j$. So $\frac kp=\frac{s_j}p+t$. 

Now, assume $\frac ap+t=\frac{a'}p+t'$ for some $a,a'\in A_i$ and $t,t'\in \mathcal T$. Then 
$$(\Omega+t)\cap (\Omega+t')\supset([r_i,r_{i+1})+\frac ap+t)\cap ([r_i,r_{i+1})+\frac{a'}p+t')=[r_i,r_{i+1})+\frac ap+t.$$
This implies that $t=t'$ and $a=a'$. So $\frac1p A_i$ tiles $\frac1p\bz$ by $\mathcal T$.  

\end{proof}

\begin{proof}[Proof of Theorem \ref{th4.13}]
That (iii) implies (i) follows from Theorem \ref{th4.8}. To see that (i) implies (iii), we show that the Universal Tiling Conjecture is true under these assumptions. The proof is similar to the one for Theorem \ref{th4.8}. Let $p$, $\Gamma$, $A_1,\dots, A_m$ as in Conjecture \ref{con4.8}.  

Now pick $0=r_1<r_2<\dots<r_{m+1}=\frac1p$ some rational points and define 
$$\Omega=\bigcup _{i=1}^m\left((r_i,r_{i+1})+\frac1pA_i\right).$$

For every $x\in[0,\frac1p)$, the sets $\Omega_x$ defined in Proposition \ref{pr4.19} are among the sets $A_i$. Using Proposition \ref{pr4.8}, since $\Gamma$ is a spectrum for all the sets $\frac1p\Omega_x$, we get that $\Omega$ is spectral with spectrum $\Gamma+p\bz$. By hypothesis $\Omega$ tiles $\br$ with some set $\mathcal T$ contained in $\frac1p\bz$. We can assume $0\in\mathcal T$. We prove that every $A_i$ tiles $\bz$ by $p\mathcal T$. 

Take $x\in(r_i,r_{i+1})$ and $k\in\bz$. Then there exists $t\in\mathcal T$ such that $x+\frac kp\in\Omega+t$, so there exist $j\in\{1,\dots,m\}$, $y\in (r_j,r_{j+1})$ and $s\in A_j$ such that $x+\frac kp=y+\frac sp+t$. This implies that $x-y\in\frac1p\bz$ and since $x,y\in[0,\frac1p)$, this means that $x=y$. Hence $i=j$ and $k=s+pt$. 

Now assume $s+pt=s'+pt'$ for some $s,s'\in A_i$ and $t,t'\in\mathcal T$. Then $(\Omega+t)\cap (\Omega+t')$ contains $(r_i,r_{i+1})+\frac sp+t=(r_i,r_{i+1})+\frac{s'}p+t'$. So $t=t'$ and $s=s'$. Thus, $A_i$ tiles $\bz$ by $p\mathcal T$. 

Assume (i) and take $\Omega$ as in (ii). Then $\frac1N\Omega$ has rational endpoints and measure 1. If $\Lambda$ is a spectrum for $\Omega$ with period $\frac rN$, then $N\Lambda$ is a spectrum for $\frac1N\Omega$ with period $r$. By assumption, $\frac1N\Omega$ tiles $\br$ by a subset of $\frac1r\bz$. Then $\Omega$ tiles $\br$ by a subset $\frac Nr\bz$.

Assume (ii) and take $\Omega$ as in (i). Let $N$ be common denominator of all $\alpha_i,\beta_i$. Then $N\Omega$ has integer endpoints and measure $N$. If $\Lambda$ is a spectrum for $\Omega$ of period $r$, then $\frac1N\Lambda$ is a spectrum for $N\Omega$, with period $\frac rN$ (which is a multiple of the minimal period $\frac{r'}N$). By assumption, $N\Omega$ tiles $\br$ by a subset of $\frac N{r'}\bz\subset\frac Nr\bz$. Then $\Omega$ tiles $\br$ by a subset of $\frac1r\bz$.
\end{proof}

\begin{acknowledgements}
This work was done while the first named author (PJ) was visiting the University of Central Florida. We are grateful to the UCF-Math Department for hospitality and support. The authors are pleased to thank Professors Deguang Han, Steen Pedersen, Qiyu Sun and Feng Tian for helpful conversations. PJ was supported in part by the National Science Foundation, via a Univ of Iowa VIGRE grant. We thank the anonymous referee for the carefully reading the manuscript and for his/her suggestions that improved the paper significantly. This work was partially supported by a grant from the Simons Foundation (\#228539 to Dorin Dutkay).
\end{acknowledgements}
\bibliographystyle{alpha}
\bibliography{eframes}

\end{document}